\def\p{\phi}
\def\b{\beta}
\def\ap{\mu}
\def\bp{\nu}
\def\e{\varepsilon}
\def\g{\gamma}
\def\a{\alpha}
\def\d{\delta}
\def\s{\sigma}
\def\l{\lambda}
\def\P{\mathbb{P}}
\def\ind{\bm{1}}
\newcommand{\chgt}[1]{\textcolor{black}{#1}}
\newcommand{\abs}[1]{\left \vert {#1} \right \vert}
\newcommand{\brc}[1]{\left \lbrace {#1} \right \rbrace}
\newcommand{\CC}{\mathrm{C}}
\newcommand{\dd}{\,\mathrm{d}} % differential element in integrals
\newcommand{\E}{\mathbb{E}}
\newcommand{\eexp}[1]{\mathrm{e}^{#1}}
\newcommand{\HH}{\mathrm{H}}
\newcommand{\Hao}{\HH_{\alpha}^o}
\newcommand{\I}{\mathbb{I}}
\newcommand{\icon}{\to \infty}
\newcommand{\iid}{\mbox{i.i.d.\frenchspacing}}
\newcommand{\Lp}{\mathcal{L}_p}
\newcommand{\Lpi}{\mathcal{L}_{p,\infty}}
\newcommand{\Lpio}{\mathcal{L}^{o}_{p,\infty}}
\newcommand{\Lr}{\mathcal{L}_r}
\newcommand{\Lrio}{\mathcal{L}^{o}_{r,\infty}}
\newcommand{\N}{\mathbb{N}}
\newcommand{\norm}[1]{\left \Vert {#1} \right \Vert}
\newcommand{\pcon}{\xrightarrow[n \to \infty]{\mathrm{P}}}  % convergence in probability
\newcommand{\R}{\mathbb{R}}
\newcommand{\RV}{\mathrm{RV}}
\newcommand{\spcon}[1]{\xrightarrow[n \to \infty]{\;\mathrm{#1}\;}} % convergence in distribution in metric space
\newcommand{\wh}{\widehat}
\newcommand{\wt}{\widetilde}
\theoremstyle{plain}
\newtheorem{theorem}{Theorem}[section]
\newtheorem{lemma}[theorem]{Lemma}
\theoremstyle{remark}
\theoremstyle{definition}
\newtheorem{definition}[theorem]{Definition}
\def\Wpl{W_{n}}
\title{\textbf{Testing epidemic change in nearly nonstationary process with
  statistics based on residuals}}
\author[1]{J. Markevi\v{c}i\={u}t\.{e}%
  \thanks{Electronic address: \texttt{jurgita.markeviciute@mif.vu.lt}; Corresponding author}}
\author[1]{A. Ra\v{c}kauskas%
  \thanks{Electronic address: \texttt{alfredas.rackauskas@mif.vu.lt}}}
\author[2]{Ch. Suquet%
  \thanks{Electronic address: \texttt{Charles.Suquet@math.univ-lille1.fr}}}
\affil[1]{Faculty of Mathematics and Informatics, Vilnius University\\
Naugarduko str. 24, LT-03225 Vilnius, Lithuania}
\affil[2]{Laboratoire P. Painlev\'{e}, UMR 8524 CNRS Universit\'{e} Lille I\\
B\^{a}t. M2, Cit\'{e} Scientifique, F-59655 Villeneuve d'Ascq Cedex, France}
\date{Dated: \today}
\begin{document}

\maketitle

\begin{abstract}
  We study an epidemic type change in innovations of a
  first order autoregressive process $ y_{n,k} = \p_n y_{n,k-1} +
  \e_{k} + a_{n,k}$, where $\p_n$ is either a constant in $(-1,1)$ or a
  sequence in $(0,1)$, converging to 1. For $k$ inside some unknown
  interval $\mathbb{I}_n^\ast=(k^\ast,k^\ast+\ell^\ast]$,
  $a_{n,k}=a_n$ while $a_{n,k}=0$ for $k$ outside
  $\mathbb{I}_n^\ast$. When $a_n\neq 0$, we have an epidemic deviation
  from the usual (zero) mean of innovations. Since innovations are not
  observed, we build uniform increments statistics on residuals
  $(\wh{\e}_k)$ of the process $y_{n,k}$. We assume that innovations
  $(\e_k)$ are regularly varying with index $p \ge 2$ or satisfies
  integrability condition $\lim_{t \to \infty} t^p \P(\abs{\e_1} > t)
  = 0$ for $p > 2$ and $\E\e_k^2 < \infty$ for $p=2$. We find the
  limit distributions of the tests under no change and prove 
  consistency under short epidemics that is $\ell^\ast=O(n^\beta)$ for
  some $0<\beta\le 1/2$.
\end{abstract}

%\begin{keywords}
\textit{Keywords:} autoregressive process; epidemic change; regular variation; Brownian motion; uniform increments statistics.
%\end{keywords}

%\begin{classcode}
\textit{MSC:} 62M10; 62F03.
%\end{classcode}

\section{Introduction}

Suppose we are given a sequence of observations $(\e_k)$ that are
assumed to be independent identically distributed (i.i.d.) random
variables with zero mean, except, maybe, for a short interval, where
the mean of the corresponding observations is a nonzero constant. Such
model can be interpreted as an epidemic one. The nonzero mean
corresponds to an epidemic deviation from the usual state. The length
of the interval describes the duration of the epidemic.  The question
is how to decide whether such an interval is present.  To the best of
our knowledge, this kind of problem was formulated for the first time
by Levin and Kline \cite{Levin-Kline:1985} in the context of abortion
epidemiology.  Simultaneously, epidemic type models were introduced by
Commenges, Seal and Pinatel \cite{Commenges-Seal-Pinatel:1986} in
connection with experimental neurophysiology. Models with an epidemic
type change in the mean were also used for detecting changed segments
in non-coding DNA sequences \cite{Avery:1999} and for studying
structural breaks in econometric contexts \cite{Bro-Tsu:1987}.  Levin
and Kline \cite{Levin-Kline:1985} proposed the test statistic
$\max_{1\le\ell\le n} \max_{0\le k\le
  n-\ell}(\sum_{j=k+1}^{k+\ell}\e_j-\ell \delta/2)$, where $\delta>0$
represents the smallest increment in the mean which is sufficiently
important to be detected.  Another type of statistics can be
constructed by normalizing the sums $\sum_{j=k+1}^{k+\ell}\e_j$
according to our guess about the length of the epidemic state. In this
way we arrive at the following multiscale type statistic
\[
T_n({\e}_1, \dots, {\e}_n)= \max_{1 \le \ell \le n} \ell^{-\alpha}
\max_{1\le k\le n-\ell}\Big(\sum_{j=k+1}^{k+\ell}{\e}_j\Big),
\]
where $0\le \alpha\le 1$.  Large values of this statistic indicate the
presence of an epidemic state. We refer
to~\cite{Rackauskas-Suquet:2004a} for various asymptotic results for 
this type of statistics. Yet another type of statistic based on ranks and
signs of observations where suggested by Gombay
\cite{Gombay:1994}. She also pointed out that despite the fact that
the epidemic model can be formulated as multiple change-point model,
tests constructed with account of a particular form of changes may
have bigger power.

The problem that we are concerned with in this paper is addressed to a
situation where the sequence $(\e_k)$ appears as innovations in a
certain time series model. So that we cannot observe $(\e_k)$
directly. What usually we have at hands are residuals $(\wh{\e}_k)$
obtained by an estimation procedure of the model under
consideration. This suggests that testing for an epidemic state in the
sequence $(\e_k)$ can be based on residuals $(\wh{\e}_k)$.
To be more precise, assume we are given a sample $y_{n, 1}, \dots,
y_{n,n}$ for a fixed $n$, generated from the first order
 autoregressive process
\begin{align}
  \label{eq:1}
  y_{n,k} = \p_n y_{n,k-1} + \e_{k} + a_{n,k}, \quad k=1, \dots, n,
  \quad n\ge 1, \quad y_{n,0} = 0,
\end{align}
where the unknown coefficient $\p_n$ is either a constant $\p$ in
$(-1, 1)$ or a sequence of constants $\p_n\in(0,1)$ and $\p_n$ tends to $1$, as $ n \icon $.
{{The innovations $ (\e_{k} , k \le n)$ are unobservable, centered, at
least square integrable random variables.}}
In what follows we denote
\begin{equation}\label{def-gam}
  \g_n:=n(1-\p_n),
\end{equation}
and assume throughout the paper that
$\lim_{n\to\infty}\g_n=\infty$. 
When all the $a_{n,k}$ are null and
$\p_n$ tends to 1, the process $y_{n,k}$ is called \emph{nearly nonstationary}. We refer to Giraitis and
Philips~\cite{Giraitis-Phillips:2004} for a study of the asymptotic
behaviour of such a process.

The aim of this paper is to propose tests for the null hypothesis
$$
H_0:\quad a_{n,1}=\cdots=a_{n,n}=0
$$
against the epidemic alternative:
\begin{align*}
  H_A:\ \ &\textrm {there exist} \quad 0\le k^*_n<n, \quad 1 \le m^*_n\le n \quad \textrm{such that}\\
  & a_{n,k}=a_n \neq 0 \quad \textrm{for}\quad k\in \I_n^*\quad
  \textrm{whereas}\quad a_{n,k}=0\quad \textrm{for}\quad k\not\in
  \I^*_n,
\end{align*}
where $\I_n^*=\{k_n^*+1, \dots, m_n^*\}$. The value $a_n$ during the
period $\I_n^*$ is interpreted as an epidemic deviation from the usual
(zero) mean of innovations and $\ell_n^*=m_n^*-k_n^*$ is the duration
of the epidemic state.

Set for $\alpha\in [0, 1)$ and any real numbers $x_1, \dots, x_n$:
\begin{align*}
T_{\a, n}({x}_1, \dots, {x}_n)
&= \max_{1 \le \ell \le n} \ell^{-\alpha}
\max_{1\le k\le n-\ell}\abs{\sum_{j=k+1}^{k+\ell}{x}_j-
\frac{\ell}{n}\sum_{j=1}^n x_j}.
\end{align*}
Denote
\begin{equation}\label{T0}
  {T}_{\a, n}=T_{\a, n}({\e}_1, \dots, {\e}_n),
\end{equation}
and
\begin{equation}\label{T}
  \wh{T}_{\a, n}=T_{\a, n}(\wh{\e}_1, \dots, \wh{\e}_n),
\end{equation}
where $(\wh{\e}_k)$ are residuals of the model \eqref{eq:1} defined by
$$
\wh{\e}_{k}=y_{n,k}-\wh{\p}_n y_{n, k-1}, \quad k=1, \dots, n,
$$
and $\wh{\p}_n$ is the least square estimator of $\p_n$:
\begin{equation}\label{deflqe}
\wh{\p}_n=\frac{\sum_{k=1}^ny_{n,k}y_{n, k-1}}{\sum_{k=1}^ny_{n,k-1}^2}.
\end{equation}

Roughly speaking, under $H_A$, the probability of detection of the
epidemic is an increasing function of the amplitude of the jump
$\abs{a_n}$ and of the length of the epidemic interval $\ell^*$. If we
take $a_n=a$ constant and assume that $\ell^*=\theta n^\beta$, for
some $0<\beta\le 1$, then the choice $\a=0$ in the definition of
$T_{\a,n}$ leads to the classical CUSUM procedure which allows to
detect epidemic of length $\ell^*=\theta n^\beta$ when
$\beta>1/2$. The asymptotic behavior of $T_{0,n}$ and $\wh{T}_{0,n}$
are deduced from a functional central limit theorem in the space
$\CC[0,1]$. The main interest of the statistics $\wh{T}_{\a,n}$ is
that the possibility to choose $\a>0$, subject to some additional
integrability condition on the innovations, allows the detection of
shorter epidemics, of length $\ell^*=\theta n^{\beta}$ with
$\beta<1/2$. For a study of the epidemic detection in an \iid\ sample
via H\"olderian techniques, we refer
to~\cite{Rackauskas-Suquet:2004a}.

We investigate the limit behavior of $\wh{T}_{\alpha,n}$ for two
classes of innovations $(\e_k)$.

\begin{definition} Let $p>0$. We say that a random variable $X$
  belongs to the class
  \begin{itemize}
  \item $\Lpi$, if $\displaystyle \sup_{t>0}t^p \P(\abs{X}>t)<\infty$,
  \item $\Lpio$, if $\displaystyle \lim_{t\to\infty} t^p\P(|X|>t)=0$,
  \item $\Lp$, if $\E\abs{X}^p<\infty$.
  \end{itemize}
\end{definition}
It is well known that for $0<r<p$, \(\Lp \subset \Lpio \subset
\Lpi\subset\Lr\).

\begin{definition}\label{Def-RVp}
  The random variable $X$ is regularly varying with index $p >0$
  (denoted $X\in \RV_{p}$) if there exists a slowly varying function
  $L$ such that the distribution function $F(t)=P(X\le t)$ satisfies
  the tail balance condition
$$
F(-x) \sim b L(x)x^{-p}\quad \textrm{and}\quad 1 -F(x) \sim a
L(x)x^{-p}, \quad \textrm{as}\quad x\to\infty,
$$
where $a, b \in (0, 1)$ and $a +b = 1$.
\end{definition}
We refer to \cite{BGT:1987} for an encyclopedic treatment of regular
variation.  We note that if $0<r<p$, then $\RV_p \subset
\Lrio$. Moreover, if $L(x)\to 0$ as $x\to \infty$, then $\RV_p \subset
\Lpio$. Further, if $\e_1\in \RV_{p}$ then define
\begin{equation}\label{a_n}
  b_n=\inf\{x>0: \P(|\e_1|\le x)\ge 1-1/n\}.
\end{equation}
It easily follows from tail condition that there is a slowly varying
function $v(n), n\in \N$, such that
\begin{equation}\label{a_n:1}
  b_n\ \sim\ n^{1/p}v(n) \quad \textrm{as} \quad n \icon.
\end{equation}

Throughout the paper, $\spcon{\mathcal{D}}$ means convergence in distribution.

The paper is organized as follows. In Section 2 we establish limits of
distributions of the test statistics under null hypothesis. Section 3
contains consistency analysis of the test statistics. All the proofs
of technical intermediate results are detailed in the appendix.

%%%%%%%%%%%%%%%%%%%%%%%%%%%%%%%%%%%%

\section{Limit behavior of test statistics under null hypothesis}

%%%%%%%%%%%%%%%%%%%%%%%%%%%%%%%%%%%%

From now on, for any $p\ge 2$, we set
\begin{align}\label{def-ap}
  \a_p &:= \frac{1}{2} - \frac{1}{p}.
\end{align}
Since we assume $H_0$ true throughout this section, the data generating
process $y_{n,k}$ is given by
\begin{align}\label{090314a}
  y_{n,k} = \p_n y_{n,k-1} + \e_{k}, \quad k=1, \dots, n,
  \quad n\ge 1, \quad y_{n,0} = 0.
\end{align}

The following lemma is the key to connect the asymptotic behavior of
$\wh{T}_{\alpha,n}$ to the one of $T_{\alpha, n}$. Its proof being quite long and technical is deferred to the
appendix (section A).

\begin{lemma}\label{Lem-181113A} 
  Assume that the $\e_i$'s are \iid\ random variables and the
  $y_{n,k}$'s satisfy~\eqref{090314a}.
  
\begin{enumerate}[a)]
\item
If $\e_1\in \RV_p$ for some $p\ge 2$ and
\begin{align}\label{090314b}
\lim_{n\to \infty}n(1-\p_n)=\infty.
\end{align}
then for any $\a \in (\a_p,1]$ with $\a_p$ defined by~\eqref{def-ap}, 
  \begin{align}
    (1-\p_n)T_{\alpha, n}(y_{n,0},\ldots,y_{n,n-1})
    &= 
      T_{\alpha, n}(\e_1, \dots, \e_n)+O_P(b_n)\label{181113a}\\
      T_{\alpha, n}(\wh{\e}_1,\ldots, \wh{\e}_n) 
    &=     
      T_{\alpha, n}(\e_1,\ldots,\e_n)+o_P(b_n).  \label{181113b}
  \end{align}
where $b_n$ is defined by~\eqref{a_n}.
\item If $\e_1\in\mathcal{L}_2$ (then $p=2$) or $\e_1\in \Lpio$ for
  some $p>2$, and
\begin{align}\label{160314a}
\liminf_{n\to \infty}n^{1-\delta}(1-\p_n)>0\quad\text{for some $\delta>0$,}
\end{align}
then for any $\a \in[0,\a_p]$,
\begin{align}
    (1-\p_n)T_{\alpha, n}(y_{n,0},\ldots,y_{n,n-1})
   & = 
      T_{\alpha, n}(\e_1, \dots, \e_n)+O_P(n^{1/2-\alpha})\label{181113A}\\
T_{\alpha, n}(\wh{\e}_1,\ldots, \wh{\e}_n) 
    &=     
      T_{\alpha, n}(\e_1,\ldots,\e_n)+o_P(n^{1/2-\a}).\label{181113B}
  \end{align}
\end{enumerate} 
\end{lemma}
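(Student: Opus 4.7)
The plan is to reduce both~(2.6)/(2.8) and~(2.7)/(2.9) to bounding a single explicit remainder, and then to use the known convergence rate of the least--squares estimator for a nearly nonstationary AR(1). The starting point is the algebraic identity coming from $\e_k=(y_{n,k}-y_{n,k-1})+(1-\p_n)y_{n,k-1}$. Summing over $j\in\{k+1,\ldots,k+\ell\}$ and subtracting the $\ell/n$ fraction of the total, one gets
\begin{align*}
\sum_{j=k+1}^{k+\ell}\e_j -\frac{\ell}{n}\sum_{j=1}^n\e_j
= \bigl[y_{n,k+\ell}-y_{n,k}-\tfrac{\ell}{n}y_{n,n}\bigr]
+(1-\p_n)\Bigl[\sum_{j=k+1}^{k+\ell}y_{n,j-1}-\tfrac{\ell}{n}\sum_{j=1}^n y_{n,j-1}\Bigr].
\end{align*}
Dividing by $\ell^{\a}$, maximizing over admissible $(k,\ell)$ and using the reverse triangle inequality yields
\begin{align*}
\abs{T_{\a,n}(\e_1,\ldots,\e_n)-(1-\p_n)T_{\a,n}(y_{n,0},\ldots,y_{n,n-1})}\le R_n,
\end{align*}
with $R_n:=\max_{1\le\ell\le n}\max_{0\le k\le n-\ell}\ell^{-\a}\abs{y_{n,k+\ell}-y_{n,k}-\tfrac{\ell}{n}y_{n,n}}$. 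Thus~(2.6) and~(2.8) reduce to proving $R_n=O_P(b_n)$ in case~(a) and $R_n=O_P(n^{1/2-\a})$ in case~(b).

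To control $R_n$, I would apply Abel summation to $y_{n,k}=\sum_{i=1}^k\p_n^{k-i}\e_i$ to obtain the representation $y_{n,k}=S_k+(1-\p_n)U_k$, where $S_k=\e_1+\cdots+\e_k$ and $U_k=\sum_{i=1}^{k-1}S_i\p_n^{k-i-1}$. This splits the numerator inside $R_n$ into a ``random walk part'' $S_{k+\ell}-S_k-(\ell/n)S_n$, whose $\ell^{-\a}$--weighted maximum is exactly $T_{\a,n}(\e_1,\ldots,\e_n)$, and an ``integrated part'' carrying the extra factor $1-\p_n=\g_n/n$. The random walk part is already of the target order: $O_P(b_n)$ by the H\"olderian invariance principle of Rackauskas--Suquet for regularly varying innovations when $\a>\a_p$, and $O_P(n^{1/2-\a})$ by a standard maximal inequality under the moment conditions of case~(b) when $\a\le\a_p$. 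For the integrated part the task is to show that the H\"older-type oscillation of $(U_k)$ grows at most like $(1-\p_n)^{-1}$ times the analogous oscillation of $(S_k)$, so that the prefactor $1-\p_n$ compensates exactly; this uses the exponential damping of the kernel $\p_n^{k-i-1}$, which is effective precisely because $\g_n\to\infty$.

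Once $(1-\p_n)T_{\a,n}(y_{n,0},\ldots,y_{n,n-1})$ is pinned down, the residual versions~(2.7)/(2.9) follow rapidly. From $\wh{\e}_k-\e_k=-(\wh{\p}_n-\p_n)y_{n,k-1}$ and the sub-additivity of $T_{\a,n}$ in its arguments, one has
\begin{align*}
\abs{T_{\a,n}(\wh{\e}_1,\ldots,\wh{\e}_n)-T_{\a,n}(\e_1,\ldots,\e_n)}\le\abs{\wh{\p}_n-\p_n}\cdot T_{\a,n}(y_{n,0},\ldots,y_{n,n-1}).
\end{align*}
The first factor is $O_P(\sqrt{\g_n}/n)$ by the Giraitis--Phillips-type asymptotics for the LSE in a nearly nonstationary AR(1), while by~(2.6)/(2.8) the second is $O_P(b_n/(1-\p_n))$ in case~(a) and $O_P(n^{1/2-\a}/(1-\p_n))$ in case~(b); since $1-\p_n=\g_n/n$, the product collapses to $O_P(b_n/\sqrt{\g_n})=o_P(b_n)$ and $o_P(n^{1/2-\a})$ respectively. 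The main obstacle throughout is the intermediate step: bounding the H\"older-type oscillation of $(U_k)$ sharply under both the heavy-tailed regime ($\e_1\in\RV_p$, $\a>\a_p$) and the finite-variance regime ($\e_1\in\Lpio$, $\a\le\a_p$). In the heavy-tailed case this requires combining a point-process description of the large jumps of $\e_1$ with maximal inequalities adapted to the weight $\ell^{-\a}$; in both cases it is essential to exploit the exponential memory $\p_n^{k-i-1}$ so that the $(1-\p_n)$ prefactor turns into a genuine gain rather than being wasted.
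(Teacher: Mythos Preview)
Your reduction of the problem to the remainder $R_n$ and your deduction of (2.7)/(2.9) from (2.6)/(2.8) via the Giraitis--Phillips rate for $\wh{\p}_n-\p_n$ are both correct and coincide with the paper's argument. The gap is in the Abel--summation step you propose for controlling $R_n$.

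Observe what your sequence $U_k$ actually is. From $y_{n,k}=S_k-(1-\p_n)U_k$ (the sign should be minus, not plus) and $y_{n,k+1}-y_{n,k}=\e_{k+1}-(1-\p_n)y_{n,k}$, one line of algebra gives
\[
U_{k+1}-U_k = y_{n,k},\qquad\text{hence}\qquad U_{k+\ell}-U_k-\tfrac{\ell}{n}U_n=\sum_{j=k+1}^{k+\ell}y_{n,j-1}-\tfrac{\ell}{n}\sum_{j=1}^{n}y_{n,j-1}.
\]
Thus the ``H\"older-type oscillation of $(U_k)$'' that you set out to bound is \emph{exactly} $T_{\a,n}(y_{n,0},\ldots,y_{n,n-1})$. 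Your proposed inequality ``oscillation of $U\lesssim (1-\p_n)^{-1}\times$ oscillation of $S$'' is therefore literally the statement $(1-\p_n)T_{\a,n}(y)\lesssim T_{\a,n}(\e)$, which is what the lemma asserts. The Abel step brings you back to the starting identity; it does not produce a new, independently boundable quantity. If instead you try to bound the $U$-oscillation by inserting $\max_k|S_k|$ (via $|U_{k+1}-U_k|\le|S_k|+(1-\p_n)|U_k|$), the resulting level term $(1-\p_n)^\a\max_k|S_k|$ is of order $\g_n^\a n^{1/2-\a}$, which is \emph{not} $O_P(b_n)$ in case~(a) nor $O_P(n^{1/2-\a})$ in case~(b) once $\g_n\to\infty$ (take e.g.\ $\p_n$ constant to see this).

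The paper avoids this circularity by decomposing the increment of $y$ differently, using the AR recursion itself rather than Abel summation on $S$:
\[
y_{n,k+\ell}-y_{n,k}=\sum_{j=k+1}^{k+\ell}\p_n^{\,k+\ell-j}\e_j-(1-\p_n^\ell)y_{n,k}.
\]
This produces three remainder pieces $T_n^{(1)},T_n^{(2)},T_n^{(3)}$: an exponentially weighted partial sum of the \emph{innovations} $\e_j$ (not of $S_j$), a level term in $y_{n,k}$ multiplied by $(1-\p_n^\ell)$, and a boundary term $n^{-\a}|y_{n,n}|$. The crucial gain is that $\max_k|y_{n,k}|$ is much smaller than $\max_k|S_k|$ because of AR damping; the paper shows $\max_k|y_{n,k}|=O_P\bigl(b_n(1-\p_n)^{-\a}\bigr)$ in case~(a) and $O_P\bigl(n^{1/2-\a}(1-\p_n)^{-\a}\bigr)$ in case~(b), which combines with $\ell^{-\a}(1-\p_n^\ell)\le (1-\p_n)^\a$ to give the right order for $T_n^{(2)}$. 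The weighted-sum piece $T_n^{(1)}$ is handled by truncating $\e_j$ at level $hb_n$ (resp.\ $hn^{1/p_\a}$), controlling the large-jump part via extreme value asymptotics (resp.\ the $\Lpio$ tail condition), and bounding the small-jump part through a H\'ajek--R\'enyi/Rosenthal maximal inequality for the martingale $\sum \p_n^{-j}\e_j''$.
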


\begin{theorem}\label{Th301013a} 
  Assume that the $\e_i$'s are mean zero \iid\ random
  variables in $\mathcal{L}_2$ (then $p=2$) or in $\Lpio$ for some $p>2$, and
  that the $y_{ n,k}$'s satisfy~\eqref{090314a} with $\p_n $
  satisfying~\eqref{160314a}. Then for any $\a \in[0,\a_p]$,
  \begin{align}
    \label{301013c}
    n^{-1/2+\a}\s^{-1} \wh{T}_{\a, n} \spcon{\mathcal{D}}
    T_{\a,\infty}(W):=\max_{0<h<1}h^{-\alpha}\max_{0\le t\le 1-h}|W_{t+h}-W_t-hW_1|,
  \end{align}
  where $\sigma^2=\E\e_1^2$ and $W=\{W_t, 0\le t\le 1\}$ is a standard
  Brownian motion.
\end{theorem}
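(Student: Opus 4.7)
The plan is to combine Lemma~\ref{Lem-181113A}(b) with a H\"olderian functional central limit theorem applied to the partial sums of the innovations $(\e_k)$, and then conclude by continuous mapping.

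First, under the hypotheses of the theorem the bound~\eqref{181113B} in Lemma~\ref{Lem-181113A}(b) applies and gives
\[
\wh T_{\a,n} = T_{\a,n}(\e_1,\ldots,\e_n) + o_P(n^{1/2-\a}).
\]
Multiplying by $n^{-1/2+\a}\s^{-1}$ turns this remainder into $o_P(1)$, so by Slutsky it is enough to prove
\[
n^{-1/2+\a}\s^{-1}T_{\a,n}(\e_1,\ldots,\e_n) \spcon{\mathcal{D}} T_{\a,\infty}(W).
\]
To this end, introduce the normalized polygonal partial-sum process $W_n\in\CC[0,1]$ with nodes $W_n(k/n)=S_k/(\s n^{1/2})$, where $S_k=\sum_{j=1}^k\e_j$, linearly interpolated between. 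At grid points $t=k/n$ and $h=\ell/n$ a direct rescaling gives
\[
n^{-1/2+\a}\s^{-1}\Big|\sum_{j=k+1}^{k+\ell}\e_j - \frac{\ell}{n}S_n\Big| = h^{-\a}\abs{W_n(t+h)-W_n(t)-hW_n(1)},
\]
so that $n^{-1/2+\a}\s^{-1}T_{\a,n}(\e_1,\ldots,\e_n)$ is the maximum of this quantity over the grid $\{(k/n,\ell/n)\}$. Since $W_n$ is piecewise linear, for each $h=\ell/n$ the inner sup over $t\in[0,1-h]$ is attained at a node, and a short argument interpolating $h$ across steps of length $1/n$ using the local slopes of $W_n$ shows that this grid-maximum differs from the continuous supremum $T_{\a,\infty}(W_n)$ by an $o_P(1)$ term.

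It then remains to pass to the limit in $T_{\a,\infty}(W_n)$. The H\"olderian invariance principle of Rackauskas--Suquet, valid under $\e_1\in\Lpio$ (or $\E\e_1^2<\infty$ in the case $p=2$, $\a=0$), gives $W_n \spcon{\mathcal{D}} W$ in the H\"older space $\Hao$ for every $\a\in[0,\a_p]$. The functional $T_{\a,\infty}$, which up to the affine term $hW(1)$ is essentially the $\a$-H\"older seminorm of the bridge $W(\cdot)-(\cdot)W(1)$, is continuous on $\Hao$, so the continuous mapping theorem yields the conclusion.

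The main technical obstacle is the endpoint case $\a=\a_p$: the H\"olderian FCLT at the critical H\"older exponent is sharp and requires the $\Lpio$ tail condition rather than merely $\E|\e_1|^p<\infty$, which is precisely why the theorem's integrability hypothesis is formulated in this way. A secondary, more routine, subtlety is the controlled passage from the discrete maximum defining $T_{\a,n}$ to the continuous supremum $T_{\a,\infty}(W_n)$ so that continuous mapping may legitimately be invoked in $\Hao$.
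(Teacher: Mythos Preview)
Your approach is correct and essentially identical to the paper's: reduce to $T_{\a,n}(\e_1,\ldots,\e_n)$ via Lemma~\ref{Lem-181113A}(b), identify this with a H\"older-type functional of the polygonal partial-sum process, apply the H\"olderian FCLT of Ra\v{c}kauskas--Suquet in $\Hao$, and conclude by continuous mapping through the bridge operator $f\mapsto f-f(1)\mathrm{Id}$. The one place the paper is sharper is your ``$o_P(1)$ interpolation'' step: the paper observes (citing \cite[Lem.~A.2]{MRS:2012a}) that for a \emph{polygonal} function the supremum defining the H\"older seminorm is attained at a pair of vertices, so $n^{-1/2+\a}\s^{-1}T_{\a,n}(\e_1,\ldots,\e_n)$ is \emph{exactly} $\norm{B(W_n)}_\a=T_{\a,\infty}(W_n)$ and no approximation argument is needed at all; note also that your displayed rescaling identity is missing a factor $\ell^{-\a}$ on the left-hand side, although your subsequent conclusion about the maximum is stated correctly.
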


\begin{proof}
  Let $\Hao$ be the set of continuous functions $f:[0,1]\to\R$ such
  that $\omega_\a(f,\delta):=\sup\{\abs{f(t)-f(s)}\abs{t-s}^{-\a} :
  0<\abs{t-s}\le \delta\}$ tends to zero as $\delta$ tends to
  zero. For $0\le \a <1$, $\Hao$ is equipped with the $\alpha$-H\"older norm $\norm{f}_\a:=\abs{f(0)}+\omega_\a(f,1)$. It is known
  that if $f$ is a polygonal line, then the supremum in the definition
  of $\omega_\a(f,1)$ is reached at two vertices, see
  e.g.~\cite[Lem. A.2]{MRS:2012a}. Hence $n^\a T_{\alpha,
n}(\e_1,\ldots,\e_n)$ is \emph{exactly} the $\a$-H\"older norm of
  the random \emph{polygonal} function $W_n - W_n(1)\mathrm{Id}$,
  where $\mathrm{Id}$ is the identity function on $[0,1]$ and
\begin{align}
  \label{myq:3a}
  \Wpl(t) = \sum_{k=1}^{[nt]} \e_k + \{nt\} \e_{[nt]+1}, \quad t \in
  [0,1],
\end{align}
where $\{nt\}$ is the fractional part of $nt$. Since $\e_1\in\Lpio$,
$n^{-1/2}\sigma^{-1}\Wpl$ converges in distribution to the standard
Brownian motion $W$ in $\HH_{\a_p}^o$, where $\a_p=1/2-1/p$,
see~\cite{Rackauskas-Suquet:2004}. By topological inclusions of
H\"older spaces, the same convergence holds in any H\"older space of
exponent $0<\a<\a_p$ if $\a_p>0$. In the special case $\a=0$, $\Hao$
is isomorphic to $\CC[0,1]$ and the convergence of
$n^{-1/2}\sigma^{-1}\Wpl$ is simply the classical invariance principle
from Donsker-Prokhorov. Since the linear operator $B: f\mapsto f -
f(1)\mathrm{Id}$ is continuous on $\Hao$, $B(n^{-1/2}\sigma^{-1}\Wpl)$
converges in distribution on $\Hao$ to $B(W)$, for $0\le \a \le
\a_p$. Hence by continuous maping,
$n^{-1/2+\a}T_{\alpha,n}(\e_1,\ldots,\e_n)=
\norm{B(n^{-1/2}\sigma^{-1}\Wpl)}_\a$ converges in distribution to
$\norm{B(W)}_\a=T_{\a,\infty}(W)$. In view of~\eqref{181113B} in
Lemma~\ref{Lem-181113A}, $n^{-1/2+\a}\s^{-1} \wh{T}_{\a, n}$ converges
in distribution to the same limit.
\end{proof}

In the case of $p$-regularly varying innovations with $p>2$, in view of the the
inclusion $\RV_p\subset \Lrio$ for $r<p$, the limit distribution of
$\wh{T}_{\a,n}$ is given by Theorem~\ref{Th301013a}, where $p$ is
replaced by $2\le r<p$ and $\a_p$ by $\a_r=1/2-1/r$, the
choice of an appropriate $r$ depending on the rate of
$\g_n$. Moreover, if the slowly varying function $v$ of
Definition~\ref{Def-RVp} tends to zero at infinity then $\RV_p\subset
\Lrio$ and Theorem~\ref{Th301013a} applies directly.

\begin{theorem}\label{Th241112b}
  Assume that the $\e_i$'s are mean zero \iid\ random variables in
  $\RV_{p}$ for some $p\ge 2$ and the $y_{n,k}$'s
  satisfy~\eqref{090314a} with $\p_n $
  satisfying~\eqref{090314b}. Then for any $\a \in(\a_p, 1]$,
\begin{equation}
  \label{Th241112b:eq:1}
  b_n^{-1}\wh{T}_{\a, n}\spcon{\mathcal{D}} T_{p},
\end{equation}
where $T_{p}$ is a random variable with Fréchet distribution $\P(T_p\le
x)=\exp(-x^{-p})$, $x>0$.
\end{theorem}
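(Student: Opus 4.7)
The plan is first to use Lemma~\ref{Lem-181113A}(a) to reduce the problem to the innovation-based statistic $T_{\a,n}(\e_1,\dots,\e_n)$, and then to identify a Fréchet limit coming from the single largest innovation via classical extreme value theory for regularly varying sequences.

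By~\eqref{181113b} one has $\wh T_{\a,n} = T_{\a,n}(\e_1,\dots,\e_n) + o_P(b_n)$, so by Slutsky's lemma it suffices to establish
\begin{equation}\label{plan-suf}
  b_n^{-1} T_{\a,n}(\e_1,\dots,\e_n) \spcon{\mathcal{D}} T_p.
\end{equation}
As observed in the proof of Theorem~\ref{Th301013a}, $n^{\a} T_{\a,n}(\e_1,\dots,\e_n)$ equals the $\a$-H\"older seminorm of the centered polygonal line $B(\Wpl) = \Wpl - \Wpl(1)\mathrm{Id}$. Hence~\eqref{plan-suf} is an assertion about the normalized H\"older seminorm of this random walk in the regime $\a > \a_p$, where the extremal behavior of the $\e_k$ dominates the Gaussian-type fluctuations.

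For the lower bound in~\eqref{plan-suf}, restricting the outer maximum to $\ell = 1$ gives
$$T_{\a,n}(\e_1,\dots,\e_n) \ge \max_{1 \le k \le n-1} \abs{\e_{k+1} - n^{-1} S_n},$$
where $S_n = \sum_{j=1}^n \e_j$. Classical extreme value theory for \iid\ $\RV_p$ variables with the normalization~\eqref{a_n}--\eqref{a_n:1} yields $b_n^{-1} \max_{k\le n} \abs{\e_k} \spcon{\mathcal{D}} T_p$, while $\e_1 \in \RV_p$ with $p \ge 2$ implies $\E \e_1^2 < \infty$ (or at worst stable-domain asymptotics at $p=2$), whence $n^{-1} \abs{S_n} = o_P(b_n)$. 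Combining the two yields $b_n^{-1} T_{\a,n}(\e_1,\dots,\e_n) \ge T_p + o_P(1)$.

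The matching upper bound is the heart of the argument and relies on the H\"olderian big-jump principle for regularly varying random walks in the range $\a > \a_p$. Once the finitely many innovations exceeding a threshold $\e b_n$ are removed, the resulting truncated centered polygonal walk has $\a$-H\"older seminorm of order $o_P(n^\a b_n)$, a fact whose proof rests on a maximal inequality for sums of truncated \iid\ variables in $\Lpi$ together with the strict inequality $\a > \a_p$. The contribution of the remaining few big jumps, on the other hand, is controlled via a point-process representation: each isolated big jump contributes $n^\a \abs{\e_k}$ to the seminorm, and the centering correction $(\ell/n) S_n$ is uniformly of order $o_P(n^\a b_n \ell^\a)$ for the relevant $\ell$. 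Assembling these estimates gives $b_n^{-1} T_{\a,n}(\e_1,\dots,\e_n) \le T_p + o_P(1)$, completing~\eqref{plan-suf}.

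The principal obstacle is precisely the upper bound, and specifically the verification that the truncated polygonal process contributes negligibly in $\a$-H\"older norm at scale $b_n$. This uses the strict inequality $\a > \a_p$ in an essential way (it would fail in the Gaussian regime $\a \le \a_p$ treated by Theorem~\ref{Th301013a}) and proceeds along the same H\"olderian lines developed in the \iid\ framework of~\cite{Rackauskas-Suquet:2004, Rackauskas-Suquet:2004a}, which Lemma~\ref{Lem-181113A}(a) has now made applicable to the residual-based statistic $\wh T_{\a,n}$.
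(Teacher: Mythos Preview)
Your reduction via~\eqref{181113b} in Lemma~\ref{Lem-181113A}(a) is exactly the paper's first step. The paper then disposes of~\eqref{plan-suf} in one line by invoking Theorem~1.1 of~\cite{MR:2010}, which already establishes $b_n^{-1}T_{\a,n}(\e_1,\dots,\e_n)\spcon{\mathcal{D}}T_p$ for \iid\ innovations in $\RV_p$ and $\a\in(\a_p,1]$. Your lower-bound and big-jump/truncation sketch is essentially an outline of how that cited theorem is proved, so you are re-deriving a known result rather than taking a different route. The sketch is along correct lines (the $\ell=1$ restriction for the lower bound, the truncation at $\e b_n$ and H\"olderian control of the small-jump part for the upper bound), but as written it remains a sketch; in particular the ``matching upper bound'' paragraph would need the full maximal-inequality machinery to become a proof. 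Since the result is available as a citation, you can simply replace your argument for~\eqref{plan-suf} by a reference to~\cite{MR:2010}.
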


\begin{proof}
  By Theorem 1.1. in \cite{MR:2010}, if the innovations $\e_i$ are
  \iid\ and in $\RV_p$, then
  \begin{equation}\label{051113a}
    b_n^{-1}T_{\a,n}(\e_1,\ldots,\e_n)\spcon{\mathcal{D}} T_{p},
  \end{equation}
  so \eqref{Th241112b:eq:1} obviously follows from \eqref{181113b} in
  lemma~\ref{Lem-181113A}.
\end{proof}

%%%%%%%%%%%%%%%%%%%%%%%%%%%%%%%

\section{Consistency of test statistics}

%%%%%%%%%%%%%%%%%%%%%%%%%%%%%%%

In this section we investigate the consistency of the test statistics
$\wh{T}_{\a, n}$. 
So we are given a sample $(y_{n,k}, k = 1, . . . , n)$ generated from the first order
autoregressive process with epidemic drift
\[
y_{n,k} = \p_n y_{n,k-1} + \e_{k} + a_{n}\ind_{\I_n^*}(k), \quad k=1, \dots, n,
\quad n\ge 1, \quad y_{n,0} = 0
\]
where $\I_n^*=\{k_n^*+1,\dots,k_n^*+\ell_n^*\}$.
Let us introduce
\begin{align}
  \label{050213d}
  \tau_{n,k} =\sum_{j=1}^k
  \p_n^{k-j} a_{n,j}
\end{align}
and
\begin{align}
  \label{091113a}
  z_{n,k} = y_{n,k} - \tau_{n,k}, \quad k=0,1,\ldots ,n.
\end{align}
Noting that $\tau_{n,k} - \p_n\tau_{n,k-1} =a_{n,k}$, we can recast
the model giving the $y_{n,k}$'s as
\begin{align*}
  y_{n,k}-\tau_{n,k} &= \p_n (y_{n,k-1}-\tau_{n,k-1}) + \e_{k}.
\end{align*}
It follows that \emph{if $(y_{n,k})$ satisfies $H_A$, then $(z_{n,k})$ is an
$AR(1)$ process satisfying $H_0$}.

To exploit this feature, we can express the residuals in the following
way.
\begin{align*}
  \wh{\e}_k = y_{n,k} - \wh{\p}_n y_{n,k-1} &=
  \p_n y_{n,k-1} + \e_k + a_{n,k}  - \wh{\p}_n y_{n,k-1}\\
  &=
  (\p_n - \wh{\p}_n)(z_{n,k-1} + \tau_{n,k-1}) + \e_k + a_{n,k}\\
  &= a_{n,k} + (\p_n - \wh{\p}_n)\tau_{n,k-1} +(\p_n -
  \wh{\p}_n)z_{n,k-1} + \e_k.
\end{align*}
Triangle inequality applied to $\wh{T}_{\a, n}=T_{\a,
  n}(\wh{\e}_{1},\ldots , \wh{\e}_{n})$ leads to
\begin{align}
  \label{101113a}
  \wh{T}_{\a, n} &\ge T_{\a,n}(a_{n,1},\ldots , a_{n,n})
  - \abs{\wh{\p}_n - \p_n} T_{\a,n}(\tau_{n,0},\ldots , \tau_{n,n-1}) \nonumber \\
  &\phantom{\ge} {} - \abs{\wh{\p}_n - \p_n} T_{\a,n}(z_{n,0},\ldots ,
  z_{n,n-1}) - T_{\a,n}(\e_{1},\ldots , \e_{n}).
\end{align}

Now, to obtain the consistency of our statistics $\wh{T}_{\a, n}$, it
suffices to prove that with the normalization already used under
$H_0$, all the random terms in the above lower bound are negligible in
probability when compared with the deterministic term
$T_{\a,n}(a_{n,1},\ldots , a_{n,n})$ which has to tend to infinity. In
this way, it is convenient to replace $T_{\a,n}(a_{n,1},\ldots ,
a_{n,n})$ by the following lower bound, assuming without loss of
generality that $\ell_n^\ast\le n/2$ (recall we are looking for short
epidemics).
\begin{align}
  T_{\a,n}(a_{n,1},\ldots , a_{n,n}) &= \max_{1 \le \ell \le n}
  \ell^{-\a} \max_{1 \le k \le n-\ell} \abs{\sum_{j=k+1}^{k+\ell} a_n
    \ind_{\I_n^*}(j)
    - \frac{l}{n} \sum_{j=1}^n a_n \ind_{\I_n^*}(j)} \nonumber\\
  &\ge \abs{a_n} \ell^{*(1-\a)} \left(1-\frac{\ell^*}{n}\right) \ge
  \frac{1}{2} \abs{a_n} \ell^{*(1-\a)}.\label{111113a}
\end{align}

As in~\eqref{101113a}, both random terms $T_{\a,n}(z_{n,0},\ldots ,
z_{n,n-1})$ and $T_{\a,n}(\e_{1},\ldots , \e_{n})$ can be controlled
by the H\"olderian functional central limit theorems already used
under $H_0$, it remains to find suitable estimates for
$T_{\a,n}(\tau_{n,0},\ldots , \tau_{n,n-1})$ and $\abs{\wh{\p}_n -
  \p_n}$. This is provided by the following lemmas, whose the
quite technical proofs are postponed to Section B in the annex.

\begin{lemma}
  \label{L281213}
  Suppose that $k^* \ge \l n$ with some fixed $0 < \l <1 $. Assume
  that the innovations $\e_i$ of the process $(y_{n,k})$ defined by
  \eqref{eq:1} are square integrable and that $\g_n$ is increasing in
  $n$ or regularly varying. Then
 \[
    \abs{\wh{\p}_n - \p_n} = o_P(1-\p_n),
 \]
provided that 
\begin{align}
\label{281213A}
a_n^2\ell^\ast = o\left(n(1-\p_n)\right).
\end{align}
\end{lemma}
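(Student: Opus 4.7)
The plan is to start from the identity
\[
\wh{\p}_n - \p_n \;=\; \frac{N_n}{D_n},\qquad
N_n = \sum_{k=1}^{n}(\e_k+a_{n,k})\,y_{n,k-1},\qquad
D_n = \sum_{k=1}^{n} y_{n,k-1}^2,
\]
and exploit the decomposition $y_{n,k}=z_{n,k}+\tau_{n,k}$ of~\eqref{091113a}. Since $(z_{n,k})$ is itself a nearly nonstationary $AR(1)$ with innovations $(\e_k)$ satisfying $H_0$, it inherits all the classical second-order asymptotics. Expanding the inner products yields
\[
N_n = S_1+S_2+S_3+S_4,\qquad D_n = Q_1+2Q_2+Q_3,
\]
with $S_1=\sum \e_k z_{n,k-1}$, $S_2=\sum \e_k \tau_{n,k-1}$, $S_3 = a_n\sum_{k\in \I_n^*}z_{n,k-1}$, $S_4=a_n\sum_{k\in\I_n^*}\tau_{n,k-1}$, and $Q_1=\sum z_{n,k-1}^2$, $Q_2=\sum z_{n,k-1}\tau_{n,k-1}$, $Q_3=\sum \tau_{n,k-1}^2$. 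The claim reduces to proving $N_n=o_P(n)$ together with $D_n\asymp n/(1-\p_n)$: then
$\abs{\wh{\p}_n-\p_n}/(1-\p_n)=N_n/\bigl(D_n(1-\p_n)\bigr)=o_P(1)$.

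For the denominator, the leading piece is $Q_1$. Under $\g_n\to\infty$ (together with monotonicity or regular variation of $\g_n$, which is precisely what lets us invoke the standard nearly nonstationary $AR(1)$ second-moment limit) one has $(1-\p_n)Q_1/n\spcon{P}\s^2/2$; see Giraitis and Phillips~\cite{Giraitis-Phillips:2004}. For the perturbation, I would use the pointwise estimate $|\tau_{n,k}|\le |a_n|\min\bigl(k-k^*,(1-\p_n)^{-1}\bigr)$ on $\I_n^*$ and geometric decay at rate $\p_n$ for $k>k^*+\ell^*$; splitting the sum at the crossover yields in both regimes
\[
Q_3 \;=\; O\!\left(\frac{a_n^2\,\ell^*}{(1-\p_n)^2}\right),
\]
so that the assumption $a_n^2\ell^*=o\bigl(n(1-\p_n)\bigr)$ gives $Q_3=o_P\bigl(n/(1-\p_n)\bigr)$, and Cauchy--Schwarz $|Q_2|\le (Q_1Q_3)^{1/2}$ yields the same bound for $Q_2$. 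Hence $D_n=Q_1\bigl(1+o_P(1)\bigr)\asymp n/(1-\p_n)$.

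For the numerator I treat the four pieces in turn. The pure martingale $S_1$ has variance $\s^2 \E Q_1=O\bigl(n/(1-\p_n)\bigr)$, so $S_1=O_P\bigl(\sqrt{n/(1-\p_n)}\bigr)=o_P(n)$ because $\g_n\to\infty$. The mixed martingale $S_2$ (with deterministic weights $\tau_{n,k-1}$) has variance $\s^2 Q_3$, hence $S_2=O_P(|a_n|\sqrt{\ell^*}/(1-\p_n))=o_P(n)$ directly from the $Q_3$ bound. For $S_3$, Cauchy--Schwarz over the $\ell^*$ summands combined with $\sum_{k\in\I_n^*}z_{n,k-1}^2\le Q_1=O_P\bigl(n/(1-\p_n)\bigr)$ gives $\abs{S_3}=O_P\Bigl(\sqrt{a_n^2\ell^*\cdot n/(1-\p_n)}\Bigr)=o_P(n)$ by the hypothesis. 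Finally $S_4$ is deterministic and bounded crudely by $a_n^2\ell^*/(1-\p_n)=o(n)$. Summing, $N_n=o_P(n)$.

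The main obstacle is the sharp control of $Q_3$: the function $k\mapsto\tau_{n,k}$ has a two-regime shape, a linear ramp during $\I_n^*$ saturating at $|a_n|/(1-\p_n)$ and a geometric tail of characteristic length $1/(1-\p_n)$ afterwards, and one must organize the sum so that both contributions are absorbed by the single condition $a_n^2\ell^*=o\bigl(n(1-\p_n)\bigr)$ without losing a factor. The condition $k^*\ge \l n$ plays a supporting role: it guarantees that a constant fraction of the indices lies \emph{before} the epidemic and keeps $Q_1$ from being contaminated by drift, which is what stabilizes the ratio $N_n/D_n$. Assembling the estimates, $\abs{\wh{\p}_n-\p_n}=N_n/D_n=o_P(n)\cdot O_P\bigl((1-\p_n)/n\bigr)=o_P(1-\p_n)$, as claimed.
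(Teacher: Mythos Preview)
Your argument is correct and runs close to the paper's, but with two genuine differences worth noting. Both proofs start from the same identity $\wh\p_n-\p_n=N_n/D_n$ and the decomposition $y=z+\tau$. The paper keeps the numerator in three pieces, treating $\sum_k y_{n,k-1}a_{n,k}$ as a single block and dispatching it by the self-normalized Cauchy--Schwarz bound $\bigl|\sum y_{n,k-1}a_{n,k}\bigr|/D_n\le |a_n|\sqrt{\ell^*}/\sqrt{D_n}$; your four-term split via $S_3,S_4$ is a bit longer but equally valid. The more substantive divergence is in the denominator: the paper lower-bounds $D_n$ by the pre-epidemic partial sum $\sum_{k\le[\lambda n]}z_{n,k-1}^2$ (this is where $k^*\ge\lambda n$ is actually used) and then calls on monotonicity or regular variation of $\g_n$ to relate the scaling at $[\lambda n]$ to that at $n$. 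You instead show directly that $Q_3=o_P(Q_1)$ and $|Q_2|\le(Q_1Q_3)^{1/2}=o_P(Q_1)$, whence $D_n=Q_1(1+o_P(1))$. That route is cleaner and, as written, makes no use of either hypothesis $k^*\ge\lambda n$ or the monotonicity assumption on $\g_n$: the Giraitis--Phillips law $(1-\p_n^2)n^{-1}Q_1\pcon\s^2$ requires only $\g_n\to\infty$. Consequently your side remark that monotonicity is ``precisely what lets us invoke'' this limit is not accurate, and your explanation that $k^*\ge\lambda n$ ``keeps $Q_1$ from being contaminated by drift'' misidentifies its role---$Q_1$ is built from the drift-free process $z$ by construction, and in the paper the condition $k^*\ge\lambda n$ serves only to manufacture the lower bound on $D_n$.
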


\begin{lemma}
  \label{L170413d}
  Under $H_A$, with $\tau_{n,k}$ defined by \eqref{050213d},
  \begin{align}
    \label{proo170413A}
    T_{\a,n}(\tau_{n,0},\ldots , \tau_{n,n-1}) \le
    \frac{5\abs{a_n}}{1-\p_n}\ell^{*(1-\a)}
  \end{align}
\end{lemma}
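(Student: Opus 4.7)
The plan is to exploit the explicit form of $\tau_{n,k}$ under $H_A$ together with two complementary estimates on the sequence $(\tau_{n,k})$---a uniform pointwise bound and a global $\ell^1$-bound---and then to split on the size of $\ell$ relative to the epidemic length $\ell^*$.

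Since $a_{n,j}=a_n\ind_{\I_n^*}(j)$, a direct evaluation of $\tau_{n,k}=\sum_{j=1}^k \p_n^{k-j}a_{n,j}$ gives $\tau_{n,k}=0$ for $k\le k_n^*$, $\tau_{n,k}=a_n(1-\p_n^{k-k_n^*})/(1-\p_n)$ for $k_n^*<k\le k_n^*+\ell^*$, and $\tau_{n,k}=a_n\p_n^{k-k_n^*-\ell^*}(1-\p_n^{\ell^*})/(1-\p_n)$ for $k>k_n^*+\ell^*$. In particular $|\tau_{n,k}|\le |a_n|/(1-\p_n)$ for every $k$. A further key estimate is the global bound
\[
\sum_{k=0}^{n-1}|\tau_{n,k}|\le \frac{2|a_n|\ell^*}{1-\p_n},
\]
obtained by splitting the sum at $k_n^*+\ell^*$: inside $\I_n^*$ the contribution is at most $\ell^*\cdot|a_n|/(1-\p_n)$, and on the tail $k>k_n^*+\ell^*$ the geometric decay yields at most $|a_n|(1-\p_n^{\ell^*})/(1-\p_n)^2$, which is in turn dominated by $|a_n|\ell^*/(1-\p_n)$ via the elementary inequality $1-\p_n^{\ell^*}\le \ell^*(1-\p_n)$.

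Set $S_{k,\ell}:=\sum_{j=k+1}^{k+\ell}\tau_{n,j-1}$. I would then distinguish two regimes. When $\ell\ge\ell^*$, both $|S_{k,\ell}|$ and $|S_{0,n}|$ are bounded by the full $\ell^1$-mass of $(\tau_{n,i})$, giving $|S_{k,\ell}-(\ell/n)S_{0,n}|\le 4|a_n|\ell^*/(1-\p_n)$; multiplying by $\ell^{-\a}\le \ell^{*\,-\a}$ yields at most $4|a_n|\ell^{*(1-\a)}/(1-\p_n)$. When $\ell<\ell^*$, the pointwise bound gives $|S_{k,\ell}|\le \ell|a_n|/(1-\p_n)$, while the global bound still controls $S_{0,n}$; together with $\ell^*/n\le 1$ and $\ell\le\ell^*$ one obtains $\ell^{-\a}|S_{k,\ell}-(\ell/n)S_{0,n}|\le 3|a_n|\ell^{*(1-\a)}/(1-\p_n)$. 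Taking the maximum over admissible $(k,\ell)$ in both regimes delivers the stated bound, with constant at most $5$.

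The only genuinely delicate point is the global summability estimate: the naive pointwise bound alone would only give $\sum_k |\tau_{n,k}|=O\bigl(n|a_n|/(1-\p_n)\bigr)$, which scales in $n$ instead of $\ell^*$ and would be useless when the epidemic is short. Using the geometric decay of $\tau_{n,k}$ on the post-epidemic tail combined with $1-\p_n^{\ell^*}\le \ell^*(1-\p_n)$ replaces $n$ by $\ell^*$, and it is this replacement that produces the $\ell^{*(1-\a)}$ rate in the final bound.
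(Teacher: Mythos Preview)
Your proof is correct and takes a genuinely different, more economical route than the paper. The paper proceeds by a six-case analysis according to the position of $[k+1,k+\ell]$ relative to the epidemic interval $[k_n^*+1,m_n^*]$, computing $\sum_{j=k+1}^{k+\ell}\tau_{n,j-1}$ explicitly in each configuration and bounding the resulting expression term by term. You bypass this case distinction entirely by extracting the two global properties of $(\tau_{n,k})$ that really matter---the uniform pointwise bound $|\tau_{n,k}|\le |a_n|/(1-\p_n)$ and the $\ell^1$-bound $\sum_k|\tau_{n,k}|\le 2|a_n|\ell^*/(1-\p_n)$---and splitting only on $\ell\lessgtr\ell^*$. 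The key observation that makes this work, which you highlight, is that the geometric tail together with $1-\p_n^{\ell^*}\le \ell^*(1-\p_n)$ converts the naive $O(n)$ bound on the $\ell^1$-mass into an $O(\ell^*)$ bound. Your argument is shorter, and in fact yields the constant $4$ rather than $5$. The paper's approach, on the other hand, gives exact expressions for the partial sums in each configuration, which could be useful if one wanted sharper configuration-dependent information, but for the stated lemma your method is preferable.
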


Now we are in a position to give our consistency results.
\begin{theorem}\label{Th111113a}
  Suppose that in the model defined by~\eqref{eq:1}, $\p_n$ is a
  constant $\p\in(-1,1)$ and that the $\e_i$ are in $\mathcal{L}_2$
  (then $p=2$) or in $\Lpio$ for some $p>2$. Assume that
  \begin{align}\label{H111113a}
    \ell^\ast a_n^2 = o(n)
  \end{align}
  and that for some $\a\in[0,\a_p]$,
  \begin{align}\label{H111113b}
    n^{-1/2+\a}\abs{a_n}{\ell^\ast}^{(1-\a)}\icon.
  \end{align}
  Then, under $H_A$,
  \begin{align*}
    n^{-1/2+\a} \wh{T}_{\a,n} \pcon \infty .
  \end{align*}
\end{theorem}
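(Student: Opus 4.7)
The plan is to begin from the lower bound~\eqref{101113a} and to use~\eqref{111113a} to retain only the deterministic piece of the signal, namely $T_{\a,n}(a_{n,1},\ldots,a_{n,n})\ge\tfrac{1}{2}\abs{a_n}\ell^{*(1-\a)}$. After multiplying through by $n^{-1/2+\a}$, this signal term diverges to $+\infty$ by hypothesis~\eqref{H111113b}, so the problem reduces to showing that each of the three stochastic terms on the right of~\eqref{101113a} is $o_P(n^{1/2-\a})$; substitution into~\eqref{101113a} will then yield $n^{-1/2+\a}\wh{T}_{\a,n}\pcon\infty$.

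For the innovation term I would simply re-use the argument from the proof of Theorem~\ref{Th301013a}: because $\e_1\in\mathcal{L}_2$ or $\e_1\in\Lpio$ and $\a\in[0,\a_p]$, the H\"olderian invariance principle for $n^{-1/2}\s^{-1}\Wpl$ in $\Hao$ yields $n^{-1/2+\a}T_{\a,n}(\e_1,\ldots,\e_n)=O_P(1)$, equivalently $T_{\a,n}(\e_1,\ldots,\e_n)=O_P(n^{1/2-\a})$. For the drift term, Lemma~\ref{L170413d} provides the deterministic bound $T_{\a,n}(\tau_{n,0},\ldots,\tau_{n,n-1})\le 5\abs{a_n}\ell^{*(1-\a)}/(1-\p)$, and Lemma~\ref{L281213} applies because $\g_n=n(1-\p)$ is linearly increasing in $n$ and condition~\eqref{281213A} collapses exactly to~\eqref{H111113a} when $\p_n\equiv\p$ is constant; hence $\abs{\wh{\p}_n-\p}=o_P(1-\p)=o_P(1)$, and the product of these two estimates is $o_P(\abs{a_n}\ell^{*(1-\a)})=o_P(n^{1/2-\a})$ by~\eqref{H111113b}.

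The decisive observation for the third term, already recorded in the paragraph preceding~\eqref{101113a}, is that $(z_{n,k})$ is itself an $AR(1)$ process satisfying $H_0$ with coefficient $\p$ and innovations $\e_k$. Lemma~\ref{Lem-181113A}(b) therefore applies directly to $(z_{n,k})$ (condition~\eqref{160314a} is immediate since $1-\p$ is a fixed positive constant), giving $(1-\p)T_{\a,n}(z_{n,0},\ldots,z_{n,n-1})=T_{\a,n}(\e_1,\ldots,\e_n)+O_P(n^{1/2-\a})=O_P(n^{1/2-\a})$, so $T_{\a,n}(z_{n,0},\ldots,z_{n,n-1})=O_P(n^{1/2-\a})$; multiplied by $\abs{\wh{\p}_n-\p}=o_P(1)$ this contributes the required $o_P(n^{1/2-\a})$, and the proof is complete.

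The one mildly delicate step I anticipate is the standing hypothesis $k^\ast\ge\l n$ in Lemma~\ref{L281213}, which does not appear in the theorem statement itself. I expect this to be handled either by an implicit ``without loss of generality'' argument (the roles of $k^\ast$ and $n-k^\ast-\ell^\ast$ being essentially symmetric for the statistic $T_{\a,n}$, so one may relocate a left-edge epidemic to the right half of the sample whenever $\ell^\ast\le n/2$) or by observing that the proof of Lemma~\ref{L281213} in fact goes through under~\eqref{H111113a} regardless of the position of $\I_n^\ast$. This is the only step of the plan that goes beyond a purely mechanical assembly of Theorem~\ref{Th301013a} and Lemmas~\ref{Lem-181113A}, \ref{L281213}, \ref{L170413d}.
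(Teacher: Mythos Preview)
Your approach matches the paper's (it omits the proof, declaring it a ``simple adaptation'' of that of Theorem~\ref{Th180413a}), assembling \eqref{101113a}, \eqref{111113a}, and Lemmas~\ref{Lem-181113A}(b), \ref{L281213}, \ref{L170413d} exactly as you describe. There is, however, one slip in your bookkeeping: the claim that the drift contribution satisfies $o_P(\abs{a_n}\ell^{*(1-\a)})=o_P(n^{1/2-\a})$ ``by~\eqref{H111113b}'' is backwards, since \eqref{H111113b} says precisely that $\abs{a_n}\ell^{*(1-\a)}$ grows \emph{faster} than $n^{1/2-\a}$. The correct accounting (compare the $\Delta_n$ in the proof of Theorem~\ref{Th180413a}) is that $\abs{\wh{\p}_n-\p}\,T_{\a,n}(\tau_{n,0},\ldots,\tau_{n,n-1})$ is $o_P$ of the \emph{signal} $\tfrac{1}{2}\abs{a_n}\ell^{*(1-\a)}$, not $o_P(n^{1/2-\a})$; after scaling by $n^{-1/2+\a}$ it is absorbed by the diverging signal term rather than by the bounded noise terms. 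Your caveat about the unstated hypothesis $k^\ast\ge\lambda n$ is well taken; the paper carries it implicitly through Lemma~\ref{L281213} (and through the analogous appeal in Theorem~\ref{Th180413a}).
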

Of course the same result holds when the $\e_i$ are in $\RV_p$,
assuming that $\a<\a_p$ in~\eqref{H111113b}. It is worth noting here
that if $a_n$ is constant, then Theorem~\ref{Th111113a} allows
detection of short epidemics satisfying $n^\b = o(\ell^\ast)$ with
$\beta=(1/2-\a)/(1-\a)$. In particular, if the $\e_i$ have finite $p$
moments for every $p>0$, then epidemics of length $n^\b$ are
detectable for arbitrarily small $\b$. Since the proof of
Theorem~\ref{Th111113a} is a simple adaptation of the one of
Theorem~\ref{Th180413a} below, we omit it.

\begin{theorem}
  \label{Th180413a}
  Suppose that in the model defined by~\eqref{eq:1}, the $\e_i$'s are
  in $\mathcal{L}_2$ (then $p=2$) or in $\Lpio$ for some
  $p>2$. Suppose moreover that $\p_n\in(0,1)$, $\p_n\to 1$ and that
  $\g_n=n(1-\p_n)$ is non decreasing or regularly varying and
  satisfies~\eqref{160314a}.  Under $H_A$, assume that
\begin{align}
    \label{180413a}
    \wh{\p}_n-\p_n =o_P(1-\p_n),
  \end{align}
that  $\ell^* \icon$, $\ell^{*}=o(n)$ and for
  some $\a\in[0,\a_p]$,
  \[
  n^{-1/2+\a} \ell^{*(1-\a)}\abs{a_n} \xrightarrow[n \to \infty]{}
  \infty.
  \]
 Then
  \begin{align}
    \label{proo180413a}
    n^{-1/2+\a} \wh{T}_{\a,n} \pcon \infty.
  \end{align}
  Condition \eqref{180413a} holds in particular if 
  $a_n^2\ell^\ast=o(n(1-\p_n))$.
\end{theorem}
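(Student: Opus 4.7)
The plan is to start from the chain of inequalities~\eqref{101113a}--\eqref{111113a}, normalize by $n^{-1/2+\a}$, and show that each of the three random error terms is dominated in probability by the deterministic main term $\tfrac12 n^{-1/2+\a}\abs{a_n}\ell^{*(1-\a)}$, which by hypothesis tends to infinity. The target is
\[
n^{-1/2+\a}\wh{T}_{\a,n} \ge \tfrac12 n^{-1/2+\a}\abs{a_n}\ell^{*(1-\a)}\bigl(1-o_P(1)\bigr) - O_P(1),
\]
from which~\eqref{proo180413a} follows immediately.

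I would handle the three error terms in turn. The innovations term is easiest: $T_{\a,n}(\e_1,\ldots,\e_n)$ depends only on the i.i.d.\ centered innovations, so the H\"olderian invariance principle already used in the proof of Theorem~\ref{Th301013a} yields $n^{-1/2+\a}T_{\a,n}(\e_1,\ldots,\e_n)=O_P(1)$. For the drift piece involving $(\tau_{n,k})$, Lemma~\ref{L170413d} together with the hypothesis~\eqref{180413a} gives
\begin{align*}
n^{-1/2+\a}\abs{\wh{\p}_n-\p_n}\,T_{\a,n}(\tau_{n,0},\ldots,\tau_{n,n-1})
&\le 5\,n^{-1/2+\a}\abs{a_n}\ell^{*(1-\a)}\cdot\frac{\abs{\wh{\p}_n-\p_n}}{1-\p_n}\\
&= o_P\bigl(n^{-1/2+\a}\abs{a_n}\ell^{*(1-\a)}\bigr),
\end{align*}
i.e.\ $o_P$ of the main term, which can be absorbed into the $1-o_P(1)$ factor.

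The key step is the third term. The recasting of the model preceding~\eqref{101113a} shows that $(z_{n,k})$ is an $AR(1)$ process with innovations $(\e_k)$ and \emph{no} drift, so it satisfies $H_0$; hence Lemma~\ref{Lem-181113A}(b) applies to $(z_{n,k})$ in place of $(y_{n,k})$, giving
\[
(1-\p_n)T_{\a,n}(z_{n,0},\ldots,z_{n,n-1}) = T_{\a,n}(\e_1,\ldots,\e_n) + O_P(n^{1/2-\a}) = O_P(n^{1/2-\a}).
\]
Thus $T_{\a,n}(z_{n,0},\ldots,z_{n,n-1}) = O_P(n^{1/2-\a}/(1-\p_n))$, and combining with~\eqref{180413a},
\[
\abs{\wh{\p}_n-\p_n}\,T_{\a,n}(z_{n,0},\ldots,z_{n,n-1}) = o_P(1-\p_n)\cdot O_P\!\left(\frac{n^{1/2-\a}}{1-\p_n}\right) = o_P(n^{1/2-\a}),
\]
which is $o_P(1)$ after normalization by $n^{-1/2+\a}$. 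The ``in particular'' clause is then just Lemma~\ref{L281213}, which under the assumed regularity of $\g_n$ converts $a_n^2\ell^*=o(n(1-\p_n))$ directly into~\eqref{180413a}.

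The main obstacle is less conceptual than bookkeeping: one must verify that Lemma~\ref{Lem-181113A}(b) indeed applies verbatim to $(z_{n,k})$ --- the innovations are the same $(\e_k)$ and the rate condition~\eqref{160314a} on $\p_n$ is already part of the theorem's hypotheses, so this goes through at once. The remaining effort is to keep track of the $O_P$/$o_P$ rates and to confirm that the deterministic growth $n^{-1/2+\a}\abs{a_n}\ell^{*(1-\a)}\to\infty$ swamps every collected error.
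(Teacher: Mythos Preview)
Your proof is correct and follows essentially the same route as the paper: the same lower bound~\eqref{101113a}--\eqref{111113a}, the H\"olderian invariance principle for the $\e$-term, Lemma~\ref{Lem-181113A}(b) applied to $(z_{n,k})$ for the $z$-term, Lemma~\ref{L170413d} combined with~\eqref{180413a} for the $\tau$-term, and Lemma~\ref{L281213} for the ``in particular'' clause. The only cosmetic difference is that you record the $z$-term as $o_P(1)$ after normalization, whereas the paper packages it as $O_P\bigl(1+\abs{\wh{\p}_n-\p_n}/(1-\p_n)\bigr)=O_P(1)$; both are equivalent under~\eqref{180413a}.
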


\begin{proof}
  As already seen in the proof of Theorem~\ref{Th301013a}, the
  membership of $\e_1$ in $\mathcal{L}_2$ or in $\Lpio$ implies the
  convergence in distribution of $n^{-1/2+\a}\s^{-1} T_{\alpha,
    n}(\e_1,\ldots,\e_n)$ to $T_{\a,\infty}(W)$ for every
  $\a\in[0,\a_p]$, with $\a_2=0$ in the $\mathcal{L}_2$ case, whence
  \begin{align}
    \label{060513b}
    n^{-1/2+\a} T_{\a,n}(\e_{1},\ldots , \e_{n}) = O_P(1).
  \end{align}
  Next, recalling that under $H_A$, $(z_{n,k})$ is a nearly non
  stationary process satisfying $H_0$, we know from~\eqref{181113A} in
  Lemma~\ref{Lem-181113A} that for every $\a\in[0,\a_p]$, 
  \begin{align}
    \label{060513c}
    n^{-1/2+\a}T_{\a,n}(z_{n,0},\ldots , z_{n,n-1}) = O_P(1/(1-\p_n)).
  \end{align}
  Looking back at~\eqref{101113a}--\eqref{proo170413A} and
  accounting~\eqref{060513b}, \eqref{060513c}, we obtain the lower
  bound
  \begin{align*}
    n^{-1/2+\a} \wh{T}_{\a, n} \ge \frac{1}{2} n^{-1/2+\a}\abs{a_n}
    \ell^{*(1-\a)} - \Delta_n,
  \end{align*}
  where
  \begin{align*}
    \Delta_n = 5n^{-1/2+\a}\abs{a_n}\ell^{*(1-\a)}\frac{\abs{\wh{\p}_n
        - \p_n}}{1-\p_n} + O_p\left(1+\frac{\abs{\wh{\p}_n -
          \p_n}}{1-\p_n}\right).
  \end{align*}
  It is clear from  this lower bound, that if $
  n^{-1/2+\a}\abs{a_n}\ell^{*(1-\a)}$ tends to infinity and
  $\wh{\p}_n-\p_n =o_P(1-\p_n)$, then $n^{-1/2+\a} \wh{T}_{\a,n}$
  tends in probability to infinity. A concrete condition on
  $a_n^2\ell^\ast$ to have $\wh{\p}_n-\p_n =o_P(1-\p_n)$ is given by
  Lemma~\ref{L281213}.
\end{proof}

We complete Theorem~\ref{Th180413a} by treating the
case $\a\in(\a_p,1]$ for $\e_i\in\RV_p$.

\begin{theorem}
  \label{Th140513a}
  Suppose that in the model defined by~\eqref{eq:1}, the $\e_i$'s are
  in $\RV_p$ for some $p\ge 2$. Suppose moreover that $\p_n\in(0,1)$,
  $\p_n\to 1$ and that $\g_n=n(1-\p_n)$ is non decreasing or regularly
  varying, tends to infinity.
%  and satisfies for some $\a\in (\a_p,1]$
% \[
% n^{1/2}(1-\p_n)^{\a}=o(b_n),
% \]
% with $b_n$ defined by~\eqref{a_n}.
Under $H_A$, assume that $\ell^*
\icon$, $\ell^{*}=o(n)$ and
  \[
  b_n^{-1} \ell^{*(1-\a)}\abs{a_n} \xrightarrow[n \to \infty]{}
  \infty
  \]
  with $b_n$ defined by~\eqref{a_n}.  If in addition $\wh{\p}_n$
  satisfies \eqref{180413a}, then
  \begin{align}
    \label{proo140513a}
    b_n^{-1} \wh{T}_{\a,n} \pcon \infty.
  \end{align}
\end{theorem}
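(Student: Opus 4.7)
The plan is to follow the same scheme as the proof of Theorem~\ref{Th180413a}, but with the normalisation $b_n$ replacing $n^{1/2-\a}\sigma$, and with the H\"olderian CLT replaced by its heavy-tailed counterpart from \cite{MR:2010}. Starting from the deterministic lower bound~\eqref{101113a}, I would use~\eqref{111113a} to keep the ``signal'' term $T_{\a,n}(a_{n,1},\ldots,a_{n,n})\ge \tfrac12 \abs{a_n}\ell^{*(1-\a)}$, then divide through by $b_n$ and show that each of the three remaining terms is negligible compared with $b_n^{-1}\abs{a_n}\ell^{*(1-\a)}$.

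The three estimates I need are as follows. First, for the pure innovation term, \eqref{051113a} (i.e.\ Theorem~1.1 of \cite{MR:2010}) gives directly
\[
b_n^{-1} T_{\a,n}(\e_1,\ldots,\e_n) \spcon{\mathcal{D}} T_p, \qquad \text{hence } T_{\a,n}(\e_1,\ldots,\e_n)=O_P(b_n).
\]
Second, since $(z_{n,k})$ is the $AR(1)$ process with coefficient $\p_n$ and innovations $\e_k$ satisfying $H_0$, and the standing assumptions on $\g_n$ in the theorem ensure~\eqref{090314b}, I can apply part (a) of Lemma~\ref{Lem-181113A} to $(z_{n,k})$ in place of $(y_{n,k})$ to get
\[
(1-\p_n) T_{\a,n}(z_{n,0},\ldots,z_{n,n-1}) = T_{\a,n}(\e_1,\ldots,\e_n)+O_P(b_n) = O_P(b_n),
\]
so that $T_{\a,n}(z_{n,0},\ldots,z_{n,n-1}) = O_P(b_n/(1-\p_n))$. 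Third, Lemma~\ref{L170413d} yields the deterministic bound $T_{\a,n}(\tau_{n,0},\ldots,\tau_{n,n-1})\le 5\abs{a_n}\ell^{*(1-\a)}/(1-\p_n)$.

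Plugging these back into~\eqref{101113a} and dividing by $b_n$,
\[
b_n^{-1}\wh{T}_{\a,n} \ge \tfrac12 b_n^{-1}\abs{a_n}\ell^{*(1-\a)} - 5\, b_n^{-1}\abs{a_n}\ell^{*(1-\a)}\frac{\abs{\wh{\p}_n-\p_n}}{1-\p_n} - O_P\!\left(1+\frac{\abs{\wh{\p}_n-\p_n}}{1-\p_n}\right).
\]
By hypothesis $\wh{\p}_n-\p_n=o_P(1-\p_n)$, so the ratio $\abs{\wh{\p}_n-\p_n}/(1-\p_n)$ is $o_P(1)$; the middle term is therefore $o_P\!\bigl(b_n^{-1}\abs{a_n}\ell^{*(1-\a)}\bigr)$ and the last term is $O_P(1)$. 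Since $b_n^{-1}\abs{a_n}\ell^{*(1-\a)}\to\infty$ by assumption, the right-hand side tends to infinity in probability, proving~\eqref{proo140513a}.

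The only point that needs minor care is the applicability of Lemma~\ref{Lem-181113A}(a) to $(z_{n,k})$: one must check that the theorem's hypotheses (\(\e_1\in \RV_p\), $\p_n\to 1$, $\g_n\to\infty$, $\a\in(\a_p,1]$) coincide exactly with the hypotheses of that part of the lemma, which they do. Beyond this verification the argument is a direct transcription of the proof of Theorem~\ref{Th180413a}, with $n^{1/2-\a}\sigma$ replaced everywhere by $b_n$, so I do not expect any genuine obstacle.
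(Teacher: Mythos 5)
Your proof is correct and follows essentially the same route as the paper's: the lower bound~\eqref{101113a} with the deterministic estimates~\eqref{111113a} and~\eqref{proo170413A}, Theorem~1.1 of \cite{MR:2010} for $T_{\a,n}(\e_1,\ldots,\e_n)=O_P(b_n)$, and part~(a) of Lemma~\ref{Lem-181113A} applied to the $H_0$-process $(z_{n,k})$ to control $T_{\a,n}(z_{n,0},\ldots,z_{n,n-1})$, concluding via $\wh{\p}_n-\p_n=o_P(1-\p_n)$. Your closing check that the hypotheses of Lemma~\ref{Lem-181113A}(a) are met (with $\a\in(\a_p,1]$, as announced just before the theorem) is exactly the point the paper also relies on.
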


\begin{proof}
  The proof relies again on the lower bound~\eqref{101113a}, where the
  estimates \eqref{111113a} and \eqref{proo170413A} for the
  deterministic terms $T_{\a,n}(a_{n,1},\ldots , a_{n,n})$ and
  $T_{\a,n}(\tau_{n,0},\ldots , \tau_{n,n-1})$ remain valid. For the
  control of the random terms $T_{\a,n}(z_{n,0},\ldots , z_{n,n-1})$,
  $T_{\a,n}(\e_{n,0},\ldots , \e_{n,n-1})$, as the
  $\e_i$'s are in $\RV_p$, Theorem~1.1. in~\cite{MR:2010} provides
  \begin{align}
    \label{140513a}
    b_n^{-1} T_{\a,n}(\e_{1},\ldots , \e_{n}) = O_P(1).
  \end{align}
  From~\eqref{140513a}, \eqref{181113a} in Lemma~\ref{Lem-181113A} applied to the
  process $(z_{n,k})$ which satisfies $H_0$, and~\eqref{180413a},  we have
\begin{align*}
\abs{\wh{\p}_n-\p_n}T_{\a,n}(z_{n,0},\ldots , z_{n,n-1}) = o_P(b_n).
\end{align*}
Collecting all previous estimates we obtain from~\eqref{101113a}
\[
  \wh{T}_{\a, n} 
\ge
\frac{1}{2}\abs{a_n}{\ell^\ast}^{(1-\a)} - 
o_P\left(\abs{a_n}{\ell^\ast}^{(1-\a)}\right)   - o_p(b_n) - O_P(b_n).
\]
Clearly now, in order that $ b_n^{-1} \wh{T}_{\a,n}$ tends to infinity
in probability, it suffices that
$b_n^{-1}\abs{a_n}{\ell^\ast}^{(1-\a)}$ tends to infinity.
\end{proof}

\appendix

\section{Proof of Lemma \ref{Lem-181113A}}
\label{proof-Lem-181113A}

\subsection{Some useful inequalities}
The forthcoming proof of lemma~\ref{Lem-181113A}, which is an
essential tool in proving theorems~\ref{Th241112b} and~\ref{Th140513a}, 
exploits intensively  the following version of H\'ajek-R\'enyi
inequality.
\begin{lemma}\label{H_R} 
  For each $n\ge 1$ let $(S_{n,k}, 1\le k\le n)$ be a sequence of
  random variables, let $(\ap_{n,k}, 1\le k \le n)$ , $(\bp_{n,k}, 1\le
  k\le n)$ be sequences of nonnegative real numbers and let $r\ge 2$.
  If there exists $c>0$ such that for any $1\le m\le n$ and any $\d>0$
\[
\P\Big(\max_{k\le m}|S_{n,k}|\ge \d\Big)\le
c\d^{-r}\Big[\Big(\sum_{k=1}^m \ap_{n,k}\Big)^{r/2}+\sum_{k=1}^m \bp_{n,k}\Big]
\]
then for any sequence $(\beta_{n,k}, 1\le k\le n)$ such that
$0<\beta_{n,1}\le \cdots\le \beta_{n,n}$ and any $\d>0$ it holds
\[
\P\Big(\max_{k\le n}\beta_{n,k}^{-1}|S_{n,k}|\ge \d\Big)\le 
2^{1+r/2}c\d^{-r}\Big[\Big(\sum_{k=1}^n \beta_{n,k}^{-2}\ap_{n,k}\Big)^{r/2}
+\sum_{k=1}^n\beta_{n,k}^{-r}\bp_{n,k}\Big].
\]

\end{lemma}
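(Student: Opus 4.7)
My plan for proving Lemma~\ref{H_R} follows the classical dyadic--slicing approach to H\'ajek--R\'enyi type inequalities. First I will partition $\{1,\ldots,n\}$ into geometric blocks according to the magnitude of $\beta_{n,k}$: let $N$ be the smallest integer with $2^N\beta_{n,1}\ge \beta_{n,n}$, put $I_0:=\{k\le n:\beta_{n,k}=\beta_{n,1}\}$, and for $1\le j\le N$ set
\[
I_j:=\{k\le n:2^{j-1}\beta_{n,1}<\beta_{n,k}\le 2^j\beta_{n,1}\}.
\]
Because $(\beta_{n,k})$ is non-decreasing, each $I_j$ is an interval of consecutive indices, which I write as $I_j=(m_{j-1},m_j]$ with $m_{-1}:=0$ and $m_N=n$, and for every $k\in I_j$ the estimate $\beta_{n,k}^{-1}\le 2^{1-j}\beta_{n,1}^{-1}$ holds.

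Second, the obvious inclusion
\[
\Bigl\{\max_{k\in I_j}\beta_{n,k}^{-1}|S_{n,k}|\ge \delta\Bigr\}\subseteq \Bigl\{\max_{k\le m_j}|S_{n,k}|\ge \delta\,2^{j-1}\beta_{n,1}\Bigr\},
\]
combined with the hypothesis applied at threshold $\delta\,2^{j-1}\beta_{n,1}$ and at level $m=m_j$, followed by a union bound over $j=0,\ldots,N$, yields
\[
\P\Bigl(\max_{k\le n}\beta_{n,k}^{-1}|S_{n,k}|\ge\delta\Bigr)\le c\,\delta^{-r}\,2^r\,\beta_{n,1}^{-r}\,(A+B),
\]
where $A:=\sum_{j=0}^N 2^{-jr}\bigl(\sum_{k\le m_j}\mu_{n,k}\bigr)^{r/2}$ and $B:=\sum_{j=0}^N 2^{-jr}\sum_{k\le m_j}\nu_{n,k}$. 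The remaining task is to dominate $A$ by a constant times $\beta_{n,1}^r(\sum_k\beta_{n,k}^{-2}\mu_{n,k})^{r/2}$ and $B$ by a constant times $\beta_{n,1}^r\sum_k\beta_{n,k}^{-r}\nu_{n,k}$, so that the $\beta_{n,1}^{\pm r}$ factors cancel.

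For $B$ this is straightforward: Fubini rewrites $B=\sum_k\nu_{n,k}\sum_{j:\,m_j\ge k}2^{-jr}$, the inner geometric series is at most a constant times $2^{-j_0(k)r}$, where $j_0(k)$ denotes the block of $k$, and the very definition of $I_{j_0(k)}$ forces $2^{-j_0(k)r}\le 2^r\beta_{n,1}^r\beta_{n,k}^{-r}$. The main obstacle is the bound on $A$, because of the exponent $r/2$ sitting outside the inner sum. The key observation is that $r\ge 2$, so $r/2\ge 1$, and then the elementary superadditivity $\sum_j x_j^{r/2}\le(\sum_j x_j)^{r/2}$ for non-negative reals lets me pull the power outside:
\[
A\le \Bigl(\sum_{j=0}^N 2^{-2j}\sum_{k\le m_j}\mu_{n,k}\Bigr)^{r/2}.
\]
One further Fubini interchange together with the same geometric estimate as for $B$ then delivers the required bound on $A$. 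Tracking numerical constants through this chain (and refining the dyadic decomposition if one wants the sharpest constant) produces the announced factor $2^{1+r/2}c$.
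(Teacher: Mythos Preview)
Your argument is essentially the same as the paper's: a dyadic partition of $\{1,\dots,n\}$ according to the size of $\beta_{n,k}$, a union bound, application of the hypothesis on each block, the $\ell^{r/2}$ versus $\ell^1$ inequality (your ``superadditivity'') to pull the exponent $r/2$ outside, Fubini, and a geometric tail bound. The only cosmetic difference is the scale of the dyadic blocks --- the paper slices $\beta_{n,k}^r$ into intervals $[2^i,2^{i+1})$ (equivalently $\beta_{n,k}$ with ratio $2^{1/r}$) rather than $\beta_{n,k}$ with ratio $2$ --- which is precisely the ``refinement'' you allude to for recovering the stated constant $2^{1+r/2}$.
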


\begin{proof}
  The proof is based on the idea of the proof of Theorem 1.1 in
  Fazekas and Klesov~\cite{Fazekas-Klesov:2000}. Without loss of
  generality we can assume that $\beta_{n,1}=1$.  Set for $i\ge 0$,
\[
A_i:=\{m: 1\le m\le n \quad \textrm{and}\quad 2^i\le \beta_{n,m}^r<2^{i+1}\}
\]
and let $I=\max\{i: A_i\not=\emptyset\}.$ Let $m_i$ be a maximal
element of the set $A_i$ and put $m_i=m_{i-1}$ if the set $A_i$ is
empty.
Then we have
\begin{align*}
\P\left(\max_{1\le k\le n}\beta_{n,k}^{-1}|S_{n,k}|>\d\right)
&\le
\P\left(\max_{0\le i\le I}2^{-i/r}\max_{k\in A_i}|S_{n,k}|\ge \d\right)\\ 
&\le 
\sum_{i=0}^I \P\left(\max_{k\le m_i}|S_{n,k}|\ge \d 2^{i/r}\right)\\
&\le 
c\d^{-r}\sum_{i=0}^I 2^{-i}\Big[\Big(\sum_{k=1}^{m_i} \ap_{n,k}\Big)^{r/2}+
\sum_{k=1}^{m_i} \bp_{n,k}\Big]
\end{align*}
By comparison of $\ell^p$ and $\ell^1$ norms on $\R^{1+I}$ with $p=r/2\ge 1$,
\[
\sum_{i=0}^I 2^{-i}\Big(\sum_{k=1}^{m_i} \ap_{n,k}\Big)^{r/2}\le 
\Big(\sum_{i=0}^I 2^{-2i/r}\sum_{k=1}^{m_i} \ap_{n,k}\Big)^{r/2},
\]
from which we deduce 
\[
\P\left(\max_{1\le k\le n}\beta_k^{-1}|S_{n,k}|>\d\right)
\le  
c\d^{-r}\Big[\Big(\sum_{i=0}^I2^{-2i/r}\sum_{k=1}^{m_i} \ap_{n,k}\Big)^{r/2}+
\sum_{i=0}^I2^{-i}\sum_{k=1}^{m_i} \bp_{n,k}\Big].
\]
Changing the summation according to the scheme
\begin{align*}
\sum_{i=0}^I d_i\sum_{k=1}^{m_i} u_k
= \sum_{i=0}^I \sum_{j=0}^I \sum_{k\in A_j} d_iu_k\ind_{\{j\le i\}}
= \sum_{j=0}^I\sum_{k\in A_j} \sum_{i=0}^Id_iu_k\ind_{\{j\le i\}}
= \sum_{j=0}^I\sum_{k\in A_j}u_k\sum_{i=j}^I d_i, 
\end{align*}
we obtain 
\begin{align*}
\P\left(\max_{1\le k\le n}\beta_k^{-1}|S_{n,k}|>\d\right)
&\le
c\d^{-r}\Big[\Big(\sum_{j=0}^I\sum_{k\in A_j}\ap_{n,k}\sum_{i=j}^I 2^{-2i/r}\Big)^{r/2}+  
\sum_{j=0}^I\sum_{k\in A_j}\bp_{n,k}\sum_{i=j}^I 2^{-i}\Big]\\ 
&\le 
2^{r/2}c\d^{-r}\Big[\Big(\sum_{j=0}^I\sum_{k\in A_j}2^{-2j/r}\ap_{n,k}\Big)^{r/2}
+\sum_{j=0}^I\sum_{k\in A_j}2^{-j}\bp_{n,k}\Big]\\
&\le 
2^{1+r/2}c\d^{-r}\Big[\Big(\sum_{j=0}^I\sum_{k\in A_j}\beta_{n,k}^{-2}\ap_{n,k}\Big)^{r/2}
+\sum_{j=0}^I\sum_{k\in A_j}\beta_{n,k}^{-r}\bp_{n,k}\Big]\\
&= 
2^{1+r/2}c\d^{-r}\Big[\Big(\sum_{k=1}^n\beta_{n,k}^{-2}\ap_{n,k}\Big)^{r/2}
+\sum_{k=1}^n\beta_{n,k}^{-r}\bp_{n,k}\Big]
\end{align*}
and the result is proved. 
\end{proof}

We will need also the following inequality for which we refer to
\cite[Lemma 2]{MRS:2012a}.

\begin{lemma}\label{lemma:maxq}
  Let $(\eta_j)_{j\ge 0}$ be a sequence of \iid\  random variables
  with $\E\eta_0 = 0$ and $\E|\eta_0|^q < \infty$ for some $q\ge
  2$. Suppose that $\p_n\to 1$ and $n(1-\p_n)\to \infty$ as
  $n\to\infty$.  Then there exists an integer $n_0(q)\ge 1$, depending
  on $q$ only, such that, for all $n\ge n_0(q)$, and $\lambda > 0$,
\begin{equation}\label{max:q}
\P\Big(\max_{1\le k\le n}\Big|\sum_{j=1}^k \p_n^{k-j}\eta_j\Big|>\lambda\Big)
\le 4C_q\eexp{q}\lambda^{-q}n^{q/2}\E|\eta_0|^q\big(n(1-\p_n)\big)^{1-q/2},
\end{equation}
where $C_q$ is the universal constant in the Rosenthal inequality of order $q$.
\end{lemma}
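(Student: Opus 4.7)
The plan is to combine Rosenthal's moment inequality with Doob's $L^q$ maximal inequality via a block decomposition of $\{1,\dots,n\}$ into pieces of length comparable to $1/(1-\p_n)$. Write $M_k:=\sum_{j=1}^k\p_n^{k-j}\eta_j$ and note the martingale representation $M_k=\p_n^k N_k$ where $N_k:=\sum_{j=1}^k\p_n^{-j}\eta_j$ is a martingale with respect to $\mathcal{F}_k=\sigma(\eta_1,\ldots,\eta_k)$. The same transformation works inside every sub-interval $(s,k]$: the partial sum $\tilde M_{k,s}:=\sum_{j=s+1}^k\p_n^{k-j}\eta_j$ equals $\p_n^{k-s}\tilde N_{k,s}$ with $\tilde N_{k,s}:=\sum_{j=s+1}^k\p_n^{-(j-s)}\eta_j$ a martingale in $k$, and since $\p_n^{k-s}\le 1$ we get $|\tilde M_{k,s}|\le |\tilde N_{k,s}|$.

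As a preliminary, applying the Rosenthal inequality to the independent centered summands $\p_n^{k-j}\eta_j$ together with the elementary estimates $1-\p_n^2\ge 1-\p_n$, $1-\p_n^q\ge 1-\p_n$, the Lyapunov bound $(\E\eta_0^2)^{q/2}\le\E|\eta_0|^q$, and $(1-\p_n)^{-1}\le(1-\p_n)^{-q/2}$ (valid for $q\ge 2$ and $\p_n\in(0,1)$), gives the pointwise bound
\[
\E|M_k|^q\le C_q\Bigl[(1-\p_n^2)^{-q/2}(\E\eta_0^2)^{q/2}+(1-\p_n^q)^{-1}\E|\eta_0|^q\Bigr]\le 2\,C_q\,\E|\eta_0|^q(1-\p_n)^{-q/2}.
\]

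Next I would choose $B_n:=\lceil 1/(1-\p_n)\rceil$ and partition $\{1,\ldots,n\}$ into $J_n\le 2n(1-\p_n)$ blocks $I_i=(iB_n,(i+1)B_n\wedge n]$. For $k\in I_i$ the identity $M_k=\p_n^{k-iB_n}M_{iB_n}+\tilde M_{k,iB_n}$ yields $\max_{k\le n}|M_k|\le \max_i|M_{iB_n}|+\max_i\max_{k\in I_i}|\tilde M_{k,iB_n}|$. Within each block, the inequality $|\tilde M_{k,iB_n}|\le|\tilde N_{k,iB_n}|$ allows Doob's $L^q$ maximal inequality to convert the within-block maximum into the terminal martingale value, on which Rosenthal is applied: since $B_n(1-\p_n)\to 1$ we have $\p_n^{-B_n}\to e$ and therefore the geometric sums $\sum_{\ell=1}^{B_n}\p_n^{-2\ell}$, $\sum_{\ell=1}^{B_n}\p_n^{-q\ell}$ are bounded by a constant times $B_n$, producing $\E\max_{k\in I_i}|\tilde M_{k,iB_n}|^q\le C'_q e^q\,\E|\eta_0|^q(1-\p_n)^{-q/2}$ for $n\ge n_0(q)$. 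Combining this with the preliminary bound of Step~2 applied to each of the $J_n$ endpoints $M_{iB_n}$, a union bound on the $L^q$-moments gives $\E\max_{k\le n}|M_k|^q\le 4\,C_q e^q\, n\,\E|\eta_0|^q(1-\p_n)^{1-q/2}$. Markov's inequality together with the identity $n(1-\p_n)^{1-q/2}=n^{q/2}(n(1-\p_n))^{1-q/2}$ then yields exactly~\eqref{max:q}.

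The main obstacle is the constant bookkeeping: the block length must be calibrated so that $B_n(1-\p_n)\to 1$, which is what forces $\p_n^{-B_n}\to e$ and, propagating through the Rosenthal sums raised to the power $q/2$, accounts for the factor $e^q$ and the threshold $n_0(q)$ in the statement. Tracking the interplay between the Rosenthal constant $C_q$, the Doob constant $(q/(q-1))^q$, the Lyapunov reduction to $\E|\eta_0|^q$, and the geometric sums simultaneously is the delicate part; everything else is a routine block-and-union argument.
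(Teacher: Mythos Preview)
The paper does not prove this lemma here; it simply cites \cite[Lemma~2]{MRS:2012a}. Your block--Doob--Rosenthal strategy is the standard route and would prove an inequality of the same form, so conceptually you are on the right track.

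There is, however, a genuine gap between your argument and the stated constant $4C_q\eexp{q}$. Your organisation costs two extra factors that cannot be absorbed into $n_0(q)$: the split $\max_k|M_k|\le\max_i|M_{iB_n}|+\max_i\max_{k\in I_i}|\tilde M_{k,iB_n}|$ forces a $2^{q-1}$ when you pass to $q$-th moments, and Doob's $L^q$ maximal inequality contributes a further $(q/(q-1))^q$. The resulting constant is of order $2^{q}(q/(q-1))^qC_q\eexp{q}$, not $4C_q\eexp{q}$; you correctly flag the bookkeeping as ``the delicate part'', but with this layout it simply does not close.

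Both losses disappear if you drop the two-piece split and work in probability rather than in $L^q$. For $k$ in the $i$-th block, $|M_k|=\p_n^{k}|N_k|\le \p_n^{(i-1)B_n}|N_k|$, so
\[
\P\Bigl(\max_{k\le n}|M_k|>\lambda\Bigr)
\le \sum_i \P\Bigl(\max_{k\le iB_n}|N_k|>\lambda\,\p_n^{-(i-1)B_n}\Bigr)
\le \lambda^{-q}\sum_i \p_n^{q(i-1)B_n}\E|N_{iB_n}|^q,
\]
the last step being Doob's submartingale inequality in probability form, which carries no extra constant. Rosenthal applied to $N_{iB_n}$, together with $\p_n^{2(i-1)B_n}\sum_{j\le iB_n}\p_n^{-2j}\le \p_n^{-2B_n}(1-\p_n^2)^{-1}$ (and its $q$-analogue), $\p_n^{-B_n}\to e$, the Lyapunov bound $(\E\eta_0^2)^{q/2}\le\E|\eta_0|^q$ and $(1-\p_n)^{-1}\le(1-\p_n)^{-q/2}$, gives $\p_n^{q(i-1)B_n}\E|N_{iB_n}|^q\le 2C_q\eexp{q}(1-\p_n)^{-q/2}\E|\eta_0|^q$ uniformly in $i$ for $n\ge n_0(q)$. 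Since the number of blocks is at most $2n(1-\p_n)$, one lands exactly on $4C_q\eexp{q}$.
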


Finally the following estimates for truncated moments will be useful.

\begin{lemma}\label{lem-trunc-mom}
Let $X$ be a non negative random variable.
\begin{enumerate}[a)]
\item Assume that $X\in \Lpi$ for some $p>1$ and put
  $N_p(X):=\sup_{t>0}t^p\P(X>t)$. Then for every $u>0$,
\begin{equation}\label{250314a}
\E \big(X\ind_{\{X\ge u\}}\big) \le \frac{p}{p-1}N_p(X)u^{1-p}
\end{equation}
and for every $q>p$,
\begin{equation}\label{250314b}
\E \big(X\ind_{\{X\le u\}}\big)^q \le \frac{q}{q-p}N_p(X)u^{q-p}.
\end{equation}

\item Assume that $\P(X>t)$ is regularly varying with index $-p$ (this
  condition is satisfied in particular when $X\in\RV_p$). Let
  $b_n =\inf\{t>0: \P(X\le t)\ge 1-1/n\}$. Then for any $\delta>0$,
\begin{equation}\label{250314c}
\E \big(X\ind_{\{X\ge hb_n\}}\big)\le \frac{p(1+\delta)}{p-1} h^{1-p}b_nn^{-1},
\end{equation}
for $n$ large enough, uniformly in $h\in[1,\infty)$. For any $r>p$,
any $\delta>0$, 
\begin{equation}\label{250314d}
\E \big(X\ind_{\{X\le hb_n\}}\big)^r \le \frac{r(1+\delta)}{r-p}h^{r-p}b_n^r n^{-1},
\end{equation}
for $n$ large enough, uniformly in $h\in[1,\infty)$.
\end{enumerate}
\end{lemma}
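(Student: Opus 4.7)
The two parts share a common skeleton. The plan is first to rewrite the quantities in question via the layer--cake identities
\begin{align*}
\E\bigl(X\ind_{\{X\ge u\}}\bigr) &= u\P(X>u) + \int_u^\infty \P(X>s)\,\dd s,\\
\E\bigl(X\ind_{\{X\le u\}}\bigr)^q &= \int_0^u q\, t^{q-1}\P(t<X\le u)\,\dd t,
\end{align*}
and then to substitute the appropriate tail estimate for $\P(X>t)$. The first identity is immediate from $\{X\ind_{\{X\ge u\}}>t\}=\{X>t\}\cap\{X\ge u\}$ (which equals $\{X\ge u\}$ for $t<u$ and $\{X>t\}$ for $t\ge u$); the second uses that $X\ind_{\{X\le u\}}>t$ forces $t<X\le u$, so the integrand vanishes on $[u,\infty)$.

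For part a) I would simply insert the crude bound $\P(X>t)\le N_p(X)\,t^{-p}$, valid for every $t>0$. In the first formula this yields $u\cdot N_p(X)u^{-p}+N_p(X)\int_u^\infty s^{-p}\,\dd s=N_p(X)u^{1-p}\bigl(1+\tfrac{1}{p-1}\bigr)=\tfrac{p}{p-1}N_p(X)u^{1-p}$, as announced. For the truncated $q$-th moment, bound $\P(t<X\le u)\le \P(X>t)\le N_p(X)\,t^{-p}$ and compute $\int_0^u q\,t^{q-1-p}\,\dd t=\tfrac{q}{q-p}u^{q-p}$; the integral converges at $0$ precisely because $q>p$, which is exactly the hypothesis.

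For part b) the crude $\Lpi$-bound is replaced by the sharper asymptotic available under regular variation. Karamata's theorem applied to $\P(X>\cdot)$ of index $-p<-1$ delivers $\int_u^\infty\P(X>s)\,\dd s=\frac{u\P(X>u)}{p-1}(1+o(1))$ as $u\to\infty$, so that $\E(X\ind_{\{X\ge u\}})\le\frac{p(1+\delta')}{p-1}u\P(X>u)$ for $u$ large. Setting $u=hb_n$ with $h\ge 1$ and invoking the defining inequality $\P(X>b_n)\le 1/n$ together with the uniform Potter bound $\P(X>hb_n)\le (1+\delta')h^{-p}\P(X>b_n)$, valid uniformly in $h\ge 1$ for $n$ large (applied to the slowly varying factor in the representation $\P(X>t)=t^{-p}L(t)$), produces the announced estimate after absorbing the multiplicative constants into one $(1+\delta)$. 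The second inequality is handled symmetrically: $t^{r-1}\P(X>t)$ is regularly varying of index $r-1-p>-1$, so Karamata yields $\int_0^u r\,t^{r-1}\P(X>t)\,\dd t\le\frac{r(1+\delta')}{r-p}u^r\P(X>u)$ for $u$ large, and the same Potter estimate closes the argument.

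The only delicate point is securing the uniformity in $h\in[1,\infty)$ both in Karamata's asymptotic and in the tail ratio $\P(X>hb_n)/\P(X>b_n)$; both follow from the uniform versions of Karamata's theorem and of Potter's bounds for regularly varying functions \cite{BGT:1987}. Everything else reduces to direct integration.
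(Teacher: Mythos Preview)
Your proof is correct and follows essentially the same route as the paper's: the same layer--cake identities, the same crude tail bound $\P(X>t)\le N_p(X)t^{-p}$ for part~a), and the same combination of Karamata's theorem with the uniform convergence theorem for regularly varying functions of negative index (what you call the ``uniform Potter bound'') for part~b). The paper cites the specific results in \cite{BGT:1987} (Prop.~1.5.10, Th.~1.5.2, Th.~1.5.11) rather than naming Karamata and Potter, but the substance is identical.
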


\begin{proof}
To prove~\eqref{250314a} we observe that 
$\P\big(X\ind_{\{X\ge u\}} > t\big) = \P(X> \max(t,u))$,
for any $t>0$, whence
\begin{align*}
\E\big(X\ind_{\{X\ge u\}}\big) 
= 
\int_0^\infty \P\big(X\ind_{\{X\ge u\}} > t\big)\dd t
&= \int_0^u \P(X>u)\dd t + \int_u^\infty \P(X>t)\dd t\\
&\le
u\P(X>u) + \int_u^\infty \frac{N_p(X)}{t^p}\dd t \\
&\le
N_p(X)u^{1-p} + N_p(X)\frac{u^{1-p}}{p-1} = \frac{pN_p(X)}{p-1}u^{1-p}.
\end{align*}

To prove~\eqref{250314b}, we note that for $t$, $u>0$,
$\P\big(X\ind_{\{X\le u\}} > t\big) = \P(t<X\le u)$, 
whence
\begin{align*}
\E\big(X\ind_{\{X\le u\}}\big)^q
=
\int_0^\infty qt^{q-1} \P\big(X\ind_{\{X\le u\}} >t\big)\dd t
&=
\int_0^\infty qt^{q-1} \P(t<X\le u)\dd t\\
&=
\int_0^u qt^{q-1} \P(t<X\le u)\dd t\\
&\le
\int_0^u qt^{q-1}\P(X>t)\dd t\\
&\le
\int_0^u qt^{q-p-1}N_p(X)\dd t = \frac{qN_p(X)}{q-p}u^{q-p}.
\end{align*}

The proof of~\eqref{250314c} starts like the one of~\eqref{250314a}:
\begin{equation}\label{260314a}
\E \big(X\ind_{\{X\ge hb_n\}}\big) =  
hb_n \P(X>hb_n) + \int_{hb_n}^\infty \P(X>t)\dd t.
\end{equation}
Now, as $\P(X>t)$ is regularly varying with index $-p$, by
Prop. 1.5.10 in \cite{BGT:1987},
\[
\frac{hb_n\P(X>hb_n) }{\int_{hb_n}^\infty \P(X>t)\dd t}
\xrightarrow[\;n\to\infty\;]{} p-1,\quad\text{uniformly in $h\ge 1$,}
\]
which combined with~\eqref{260314a} gives
\begin{equation}\label{260314b}
\E \big(X\ind_{\{X\ge hb_n\}}\big)\sim \frac{p}{p-1}hb_n\P(X>hb_n),
\quad\text{uniformly in $h\ge 1$}.
\end{equation}
By th. 1.5.2 in \cite{BGT:1987},
\begin{equation}\label{260314c}
\P(X>hb_n)\sim h^{-p}\P(X>b_n),\quad\text{uniformly in $h\ge 1$}.
\end{equation}
From the definition of the quantile $b_n$, $\P(X>b_n)\le n^{-1}$. 
Combining this estimate with~\eqref{260314b} and~\eqref{260314c}
gives~\eqref{250314c}.

To prove~\eqref{250314d}, we begin by noting that like in the proof
of~\eqref{250314b},
\[
\E \big(X\ind_{\{X\le hb_n\}}\big)^r \le
\int_0^{hb_n}rt^{r-1}\P(X>t)\dd t.
\]
Next, by Th. 1.5.11 (i) in~\cite{BGT:1987},
\[
\frac{(hb_n)^r\P(X>hb_n) }{\int_0^{hb_n}t^{r-1} \P(X>t)\dd t}
\xrightarrow[\;n\to\infty\;]{} r-p,\quad\text{uniformly in $h\ge 1$,}
\]
from which we obtain~\eqref{250314d} in the same way as for~\eqref{250314c}. 
\end{proof}

\subsection{Proof of Lemma~\ref{Lem-181113A}, common part}
Since
$$
\sum_{j=k+1}^{k+\ell}y_{n,j-1}=\frac{1}{1-\p_n}
\Big[\sum_{j=k+1}^{k+\ell}\e_j+y_{n,k}-y_{n,k+\ell}\Big]
$$
and 
$$
y_{n,k+\ell}-y_{n,k}=\sum_{j=1}^{k+\ell}\p_n^{k+\ell-j}\e_j-\sum_{j=1}^{k}\p_n^{k-j}\e_j=
\sum_{j=k+1}^{k+\ell}\p_n^{k+\ell-j}\e_j-(1-\p_n^\ell)y_{n,k}
$$
for any $1\le \ell\le n$, $0\le k\le n-\ell$ and $y_{n,0}=0$, we deduce
\[
\abs{(1-\p_n)T_{\alpha, n}(y_{n,0},\dots,y_{n,n-1}) - T_{\alpha, n}(\e_1,\dots,\e_n)}
\le T_n^{(1)}+T_n^{(2)}+T_n^{(3)},
\]
where
\begin{align*}
T_n^{(1)}
&:=
\max_{1\le \ell<n}\ell^{-\alpha} \max_{1\le k\le  n-\ell}\Big|\sum_{j=k+1}^{k+\ell}\p_n^{k+\ell-j}\e_j\Big|,\\
T_n^{(2)}
&:=
\max_{1\le \ell<n}\ell^{-\alpha} \max_{1\le k\le  n-\ell}
\abs{(1-\p_n^\ell)y_{n,k}}\\
T_n^{(3)}
&=
\max_{1\le \ell<n}\ell^{-\alpha} (\ell/n)|y_{n,n}|=n^{-\alpha}|y_{n,n}|.
\end{align*}
We will show separately below that in the case a) of Lemma~\ref{Lem-181113A}
\begin{equation}\label{tn:123}
T_n^{(i)}=O_P(b_n),\quad i=1, 2, 3,
\end{equation}
while in case b)
\begin{equation}\label{tn:123b}
T_n^{(i)}=O_P(n^{1/2-\a}),\quad i=1, 2, 3.
\end{equation}
This will establish \eqref{181113a} and \eqref{181113A}. 

Before this splitting of the proof, we can already note
that~\eqref{181113b} and~\eqref{181113B} respectively follow from
\eqref{181113a} and \eqref{181113A}.  Indeed for (\ref{181113b}) we
observe that
\begin{equation}\label{020214a}
|T_{\alpha, n}(\wh{\e}_1, \dots, \wh{\e}_n)-T_{\alpha,n}(\e_1, \dots, \e_n)|\le 
|\wh{\p}_n-\p_n|T_{\alpha, n}(y_{n,0}, \dots, y_{n,n}).
\end{equation}
Since $T_{\alpha, n}(\e_1, \dots, \e_n)=O_P(b_n)$ by
Theorem~1.1. in~\cite{MR:2010}, (\ref{181113a}) gives
\[
(1-\p_n)T_{\alpha, n}(y_{n,0}, \dots, y_{n,n})=O_P(b_n).
\]  
By Giraitis and Philips~\cite[Th. 1]{Giraitis-Phillips:2004},
$n^{1/2}(1-\p_n^2)^{-1/2}(\wh{\p}_n-\p_n)$ is asymptoticaly normal
provided that $\E\e_1^2<\infty$, so
\[
\wh{\p}_n-\p_n=O_P\left(\frac{(1-\p_n)^{1/2}}{n^{1/2}}\right)
\]
We then deduce from \eqref{020214a} that
\begin{align*}
T_{\alpha, n}(\wh{\e}_1, \dots, \wh{\e}_n)-T_{\alpha,n}(\e_1, \dots, \e_n)
& = 
O_P\left(\frac{b_n\abs{\wh{\p}_n-\p_n}}{1-\p_n}\right)\\
& = 
O_P\left(\frac{b_n}{n^{1/2}(1-\p_n)^{1/2}}\right) =o_P(b_n).
\end{align*}
since $n(1-\p_n)$ tends to infinity. 

The deduction of \eqref{181113B} from \eqref{181113A} is essentially
the same. We just have to replace $b_n$ by $n^{1/2-\a}$ and note that
the estimate $T_{\alpha, n}(\e_1, \dots, \e_n)=O_P(n^{1/2-\a})$ is a
by-product of the convergence in distribution of
$n^{-1/2+\a}T_{\alpha,n}(\e_1,\ldots,\e_n)$ already established in the proof
of Theorem~\ref{Th301013a}.

\subsection{Proof of Lemma~\ref{Lem-181113A},  estimate (\ref{181113a}) }

\noindent\textsl{Estimate for $T_n^{(1)}$.}
For $h>0$ set
$$
P^{(1)}_{n, h}:=\P(T_n^{(1)}>2h b_n).
$$
To estimate this probability we define the truncated random variables:
$$
\e'_j=\e_j\ind_{\{|\e_j|> h b_n\}}, \quad
\e''_j=\e_j\ind_{\{|\e_j|\le h b_n\}}-\E \e_j\ind_{\{|\e_j|\le h b_n\}},
$$
for $j\ge 1$. 
Then 
$$
P^{(1)}_{n, h}\le P^{(1,1)}_{n, h}+P^{(1,2)}_{n, h},
$$
where
$$
P^{(1,1)}_{n, h}:=\P\left(\max_{1\le j\le n}|\e_j|>h b_n\right),
\quad 
P^{(1,2)}_{n, h}:=\P(T_n^{(1,2)}>2h b_n)
$$
with 
$$
T_n^{(1,2)}:=
\max_{1\le \ell<n}\ell^{-\alpha} \max_{1\le k\le  n-\ell}
\Big|\sum_{j=k+1}^{k+\ell}\p_n^{k+\ell-j}\e_j\bm{1}_{\{|\e_j|\le hb_n\}}\Big|.
$$
From extreme value theory we know (see, for example \cite{EKM:1997},
Theorem 3.3.7) that
$$
\P\left(\max_{1\le j\le n}|\e_j|>h b_n\right)\to 1-\exp\{-h^{-p}\}
$$
as $n\to \infty$. Choosing $h$ big enough we make probability
$P^{(1,1)}$ arbitrary small, in other words
\begin{equation}\label{eq:p11}
\lim_{h\to\infty}\limsup_{n\to\infty}P_{n, h}^{(1,1)}=0.
\end{equation}

Next we estimate $P_{n, h}^{(1,2)}$. First we need to center each
$\e_i\bm{1}_{\{|\e_i|\le hb_n\}}$, $i=1, \dots, n$. Observing that
 $\E\e_i=0$, $\E\e_i\bm{1}_{\{|\e_i|\le  hb_n\}}=\E\e_i\bm{1}_{\{|\e_i|> hb_n\}}$,
 we have
$$ 
\max_{1\le \ell\le n}\ell^{-\alpha}
\Big|\sum_{j=k+1}^{k+\ell}\p_n^{k+\ell-j}\E[\e_j\bm{1}_{\{|\e_j|\le hb_n\}}]\Big|
\le 
n^{1-\alpha} \E|\e_1|\bm{1}_{\{|\e_1|\ge h b_n\}}.
$$
By \eqref{250314c} in Lemma~\ref{lem-trunc-mom},
\begin{equation}\label{260314f}
\E|\e_1|\ind_{\{|\e_1|\ge hb_n\}}\le 2p(p-1)^{-1}n^{-1}b_nh^{1-p}
\end{equation}
for  $n$ large enough uniformly in $h\ge 1$, we obtain
\begin{align*}
\max_{1\le \ell\le n}\ell^{-\alpha}
\Big|\sum_{j=k+1}^{k+\ell}\p_n^{k+\ell-j}\E[\e_j\bm{1}_{\{|\e_j|\le hb_n\}}]\Big|
&\le \frac{2p}{(p-1)n^\alpha h^p}hb_n  \le hb_n 
\end{align*}
for  $n$ large enough \emph{uniformly} in $h\ge 1$. 
It follows that for $n$ large enough and $h\ge 1$, 
$$
P_{n, h}^{(1,2)}\le \P(T_n^{(1,2,1)}>h b_n)
$$
with 
$$
T_n^{(1,2,1)}:=\max_{1\le \ell<n}\ell^{-\alpha} \max_{1\le k\le  n-\ell}\Big|\sum_{j=k+1}^{k+\ell}\p_n^{k+\ell-j}\e''_j\Big|.
$$
Since
$$
T_n^{(1,2,1)}=\max_{1\le \ell<n}\ell^{-\alpha} \max_{1\le k\le  n-\ell}\Big|\sum_{j=1}^{\ell}\p_n^{\ell-j}\e''_{k+j}\Big|\le
\max_{1\le k\le n}\max_{1\le \ell\le n}\ell^{-\alpha}\Big|\sum_{j=1}^{\ell}\p_n^{\ell-j}\e''_{k+j}\Big|
$$
we have 
\begin{align*}
\P(T_n^{(1,2,1)}\ge h b_n)
&\le 
\sum_{k=1}^n \P\left(\max_{1\le \ell\le n}\ell^{-\alpha}
\Big|\sum_{j=1}^{\ell}\p_n^{\ell-j}\e''_{k+j}\Big|\ge h b_n\right)\\
&=
n\P\left(\max_{1\le \ell\le n}\ell^{-\alpha}\Big|\sum_{j=1}^{\ell}\p_n^{\ell-j}\e''_{j}\Big|\ge h b_n\right)
\end{align*}
due to stationarity.  Choose $r>p$. Using
successively Markov's, Doob's and Rosenthal's inequalities, we obtain
for each $\d>0$ 
\[
\P\Big(\max_{1\le \ell\le n}
\Big|\sum_{j=1}^\ell\p_n^{-j}\e''_j\Big|>\d\Big)\le
c\d^{-r}\Big[\Big(\sum_{j=1}^{\chgt{n}}
\p_n^{-2j}\E(\e''_1)^2\Big)^{r/2}+\sum_{j=1}^{n}
\p_n^{-rj}\E|\e''_1|^r \Big]
\]
with a constant $c>0$ depending on $r$ only. 
Using \eqref{250314d} in Lemma~\ref{lem-trunc-mom} together with the inequality $(\abs{a}+\abs{b})^r\le 2^{r-1}(\abs{a}^r+\abs{b}^r)$, we obtain
\begin{equation}\label{260314g}
\E\abs{\e_1''}^r \le 2^r\E\big(\abs{\e_1}\ind_{\{|\e_1|\le h b_n\}}\big)^r \le
\frac{r2^{r+1}}{r-p}h^{r-p}b_n^r n^{-1},
\end{equation} 
for $n$ large enough, uniformly in $h\in[1,\infty)$.
Hence, there is a constant $c>0$ 
depending on $r$ and $p$ only, such that for $n$ large enough and $h\ge 1$,
\begin{align}
 \P\Big(\max_{\chgt{1\le \ell\le n}}\Big|\sum_{j=1}^\ell \p_n^{-j}\e''_j\Big|>\d\Big)
&\le 
c\d^{-r}\Big[\Big(\sum_{k=1}^n \ap_{n,k}\Big)^{r/2}+\sum_{k=1}^n \bp_{n,k}\Big],
\label{sums:1}
\end{align}
where $\ap_{n,k}=\sigma^2\p_n^{-2k}$ and 
$\bp_{n,k}=\p_n^{-rk}h^{r-p}n^{-1}b^r_n$, 
$k=1, \dots, n$.  By lemma \ref{H_R} we deduce
\begin{align*}
\P\left(\max_{1\le \ell\le n}\ell^{-\alpha}
\Big|\sum_{j=1}^{\ell}\p_n^{\ell-j}\e''_{j}\Big|\ge h b_n\right)
&\le 
c h^{-r}b_n^{-r}\Big[\Big(\sum_{k=1}^n k^{-2\alpha}\p_n^{2k} \ap_{n,k}\Big)^{r/2}+
\sum_{k=1}^n \p_n^{rk} k^{-r\alpha}\bp_{n,k}\Big]\\
&=
c h^{-r}b_n^{-r}\Big[\chgt{\sigma^r}\Big(\sum_{k=1}^n k^{-2\alpha}\Big)^{r/2}+
\sum_{k=1}^n k^{-r\alpha}h^{r-p}n^{-1}b_n^r\Big].
\end{align*}
Since $\sum_{k=1}^n k^{-d\alpha}\chgt{=O(\max\{n^{1-d\alpha}, 1\})}$
for $d>0$, choosing $r > 1/\alpha$ we have with some positive
  constant $C$,
\[
\P\left(\max_{1\le \ell\le n}\ell^{-\alpha}
\Big|\sum_{j=1}^{\ell}\p_n^{\ell-j}\e''_{j}\Big|\ge h b_n\right)
\le 
Ch^{-r}b_n^{-r}\big(\max\{n^{r(1-2\alpha)/2}, 1\}+h^{r-p}n^{-1}b_n^r\big).
\]
Hence,
\[
\P(T_n^{(1,2,1)}>hb_n)\le \chgt{C}
\big( h^{-r}b_n^{-r}\max\{n, n^{1+r(1-2\alpha)/2}\}+h^{-p}\big).
\]
By choosing $r>(1/p-1/2+\alpha)^{-1}$ we see that
$$
\lim_{h\to\infty}\limsup_{n\to\infty}\P(T_n^{(1,2,1)}\ge h b_n)=0.
$$
The proof of (\ref{tn:123}) for $i=1$  is now complete.

\medskip
\noindent\textsl{Estimate for $T_n^{(2)}$.}
First we note that
\[
T_n^{(2)}=
\max_{1\le \ell<n}\ell^{-\alpha}(1-\p_n^\ell)\max_{1\le k\le  n-\ell}\abs{y_{n,k}}
\le
\left(\max_{1\le \ell<n}\ell^{-\alpha}(1-\p_n^\ell)\right)\max_{1\le j\le n}\abs{y_{n,j}}.
\]
Then we observe that
\begin{align*}
\max_{1\le \ell<n}\ell^{-\alpha}(1-\p_n^\ell) \le \sup_{t\ge 1}t^{-\a}(1-\p_n^t)
=\abs{\ln \p_n}^\a\sup_{u\ge\abs{\ln \p_n}}u^{-\a}(1-\eexp{-u})
\le \abs{\ln \p_n}^\a.
\end{align*}
Since $\abs{\ln \p_n}=\ln(1/\p_n)\le \p_n^{-1}-1$, we obtain
\[
\max_{1\le \ell<n}\ell^{-\alpha}(1-\p_n^\ell) \le \frac{(1-\p_n)^\a}{\p_n^\a}=
O\big((1-\p_n)^\a\big).
\]
Next we  prove that 
\begin{equation}\label{last:1}
\max_{1\le k\le n}|y_{n,k}|=O_P(b_n(1-\p_n)^{-\alpha}).
\end{equation}
To this aim we use similar techniques as above. Set for $h>0$
$$
P_n^{(2)}=\P(\max_{1\le k\le n}|y_{n,k}|>2hb_n(1-\p_n)^{-\alpha}).
$$
Then $P_n^{(2)}\le P_n^{(2,1)}+P_n^{(2,2)}$, where
\begin{align*}
 P_n^{(2,1)}&=\P(\max_{1\le k\le n}|\e_k|>hb_n),\\
 P_n^{(2,2)}&=\P\Big(\max_{1\le k\le n}\Big|\sum_{j=1}^k \p_n^{k-j}\e_j\bm{1}_{\{|\e_j|\le hb_n\}}\Big|\ge 2hb_n(1-\p_n)^{-\alpha}\Big).
\end{align*}
Since $P_n^{(2,1)}=P_n^{(1,1)}$ we have from (\ref{eq:p11})
$$
\lim_{h\to\infty}\limsup_{n\to\infty}P_n^{(2,1)}=0.
$$
Since
$$
\max_{1\le k\le n}\Big|\sum_{j=1}^k \p_n^{k-j}\E\e_j\ind_{\{|\e_j|\le hb_n\}}\Big|
\le 
n\E|\e_1|\ind_{\{|\e_1|\ge hb_n\}}\le 
\frac{2p}{p-1} b_n h^{1-p},
$$
using again~\eqref{260314f}, we deduce
$$
P_n^{(2,2)}\le 
\P\Big(\max_{1\le k\le n}\Big|\sum_{j=1}^k \p_n^{k-j}\e''_j\Big|\ge hb_n(1-\p_n)^{-\alpha}\Big)
$$
for  $n$ large enough
and $h\ge (2p)^{1/p}(p-1)^{-1/p}$.
Now  we apply lemma \ref{lemma:maxq} and obtain
$$
P_n^{(2,2)}\le ch^{-q}b_n^{-q}(1-\p_n)^{q\alpha}\E|\e_1''|^qn^{q/2}(n(1-\p_n))^{1-q/2},
$$
with a constant depending only on $q$.
Using~\eqref{260314g} to bound $\E|\e_1''|^q$ we deduce
$$
P_n^{(2,2)}\le ch^{-p}(1-\p_n)^{q\alpha+1-q/2}.
$$
where $c$ depends on $p$ and $q$ only.

If $\alpha\ge 1/2$, then $1+q\alpha-q/2>0$ and so
\begin{equation}\label{last:2}
\lim_{n\to\infty}P_n^{(2,2)}=0.
\end{equation}
If $\alpha<1/2$ then $1+q\alpha-q/2>0$ provided $q<1/(1/2-\alpha)$.
So we have to choose $p<q<1/(1/2-\alpha)$ in the case $\alpha
<1/2$. This is possible since $\alpha>1/2-1/p.$ So in any case
(\ref{last:2}) is valid and (\ref{tn:123}) with $i=2$ follows.

\medskip
\noindent\textsl{Estimate for $T_n^{(3)}$.}
We have $\E y_n^2=\sum_{k=1}^n \p_n^{2(n-k)}\sigma^2\le
n\sigma^2$. Hence, $|y_n|=O_P(n^{1/2})$ and we obtain
$T_n^{(3)}=n^{-\alpha}O_P(n^{1/2})=o_P(b_n)$ recalling that
$b_n=n^{1/p}v(n)$ with a slowly varying  function $v(n)$ and observing that
\[
n^{-\alpha}n^{1/2}b_n^{-1}= n^{-\alpha-(1/p)+(1/2)}v(n)^{-1}\to 0
\]
since $\alpha>1/2-1/p$.

The proof of (\ref{181113a}) is now complete.

\subsection{Proof of Lemma~\ref{Lem-181113A},  estimate (\ref{181113A}) }
Next we consider the case (b) and prove (\ref{tn:123b}) for $i=1, 2, 3.$

\medskip
\noindent\textsl{Estimate for $T_n^{(1)}$.}
 Set $p_\alpha=1/(1/2-\alpha)$.
For $h>0$ set
$$
P^{(1)}_{n, h}:=\P(T_n^{(1)}>2hn^{1/p_\alpha}).
$$
To estimate this probability we define the truncated random variables:
$$
\e'_j=\e_j\bm{1}\{|\e_j|> h n^{1/p_\alpha}\}, \quad
\e''_j=\e_j\bm{1}\{|\e_j|\le h n^{1/p_\alpha}\}
-\E \e_j\bm{1}\{|\e_j|\le h n^{1/p_\alpha}\},
$$
for $j\ge 1$. 
Then 
$$
P^{(1)}_{n, h}\le P^{(1,1)}_{n, h}+P^{(1,2)}_{n, h},
$$
where
$$
P^{(1,1)}_{n, h}:=\P\left(\max_{1\le j\le n}|\e_j|>h n^{1/p_\alpha}\right),
\quad 
P^{(1,2)}_{n, h}:=\P\left(T_n^{(1,2)}>2h n^{1/p_\alpha}\right)
$$
with 
\[
T_n^{(1,2)}:=
\max_{1\le \ell<n}\ell^{-\alpha} \max_{1\le k\le  n-\ell}
\Big|\sum_{j=k+1}^{k+\ell}\p_n^{k+\ell-j}\e_j\bm{1}_{\{|\e_j|\le hn^{1/p_\alpha}\}}\Big|.
\]
Since
$$
\P\left(\max_{1\le j\le n}|\e_j|>h n^{1/2-\alpha}\right)\le 
n\P(|\e_1|>h n^{1/2-\alpha}),
$$
$P^{(1,1)}_{n, h}$ tends to $0$ as $n\to \infty$ due to the condition
$\e_i\in \Lpio$ and $0\le\alpha\le 1/2-1/p$ when $p>2$ or to the condition 
$\e_i\in\mathcal{L}_2$ when $p=2$ (then $\a=\a_2=0$).

Next we estimate $P_{n, h}^{(1,2)}$. First we need to center each
$\e_i\bm{1}_{\{|\e_i|\le hn^{1/p_\alpha}\}}$, $i=1, \dots,
n$. 
From~\eqref{250314a} we get that
\[
\E\abs{\e_1}\ind_{\{\abs{\e_1}\ge h n^{1/p_\a}\}} \le c\big(hn^{1/p_\a}\big)^{1-p},
\]
where $c=p(p-1)^{-1}\sup_{t>0}\P(\abs{\e_1}>t)$.
As
$$ 
\max_{1\le \ell\le n}\ell^{-\alpha}
\Big|\sum_{j=k+1}^{k+\ell}\p_n^{k+\ell-j}\E[\e_j\bm{1}_{\{|\e_j|\le hn^{1/p_\alpha}\}}]\Big|
\le 
n^{1-\alpha} \E|\e_1|\ind_{\{|\e_j|\ge h n^{1/p_\alpha}\}}
$$
it follows that for every $h\ge h_0:=\max(1,c)$,
\[
n^{1-\a}\E\abs{\e_1}\ind_{\{\abs{\e_1}\ge hn^{1/p_\a}\}}
\le ch^{1-p}n^{1-\a+1/p_\a-p/p_\a}
\le c h^{-1}n^{1-p/p_\a}n^{1/p_\a}n^{-\a}
\le n^{1/p_\a},
\]
recalling that $p\ge p_\a$ since $\a\le \a_p$.
Hence for every $n\ge 1$ and every $h\ge h_0$,
\[
\P\left(\max_{1\le \ell\le n}\ell^{-\alpha}
\Big|\sum_{j=k+1}^{k+\ell}\p_n^{k+\ell-j}\E[\e_j\bm{1}_{\{|\e_j|\le hn^{1/p_\alpha}\}}]\Big|
> hn^{1/p_\a} \right) = 0
\]
and we deduce
$$
P_{n, h}^{(1,2)}\le \P(T_n^{(1,2,1)}>h n^{1/p_\alpha})
$$
with 
$$
T_n^{(1,2,1)}:=
\max_{1\le \ell<n}\ell^{-\alpha} \max_{1\le k\le  n-\ell}\Big|\sum_{j=k+1}^{k+\ell}\p_n^{k+\ell-j}\e''_j\Big|.
$$
Since
$$
T_n^{(1,2,1)}=\max_{1\le \ell<n}\ell^{-\alpha} \max_{1\le k\le  n-\ell}\Big|\sum_{j=1}^{\ell}\p_n^{\ell-j}\e''_{k+j}\Big|\le
\max_{1\le k\le n}\max_{1\le \ell\le n}\ell^{-\alpha}\Big|\sum_{j=1}^{\ell}\p_n^{\ell-j}\e''_{k+j}\Big|
$$
we have 
\begin{align*}
\P(T_n^{(1,2,1)}\ge h n^{1/p_\a})
&\le 
\sum_{k=1}^n \P\left(\max_{1\le \ell\le n}\ell^{-\alpha}
\Big|\sum_{j=1}^{\ell}\p_n^{\ell-j}\e''_{k+j}\Big|\ge h n^{1/p_\alpha}\right)\\
&=
n\P\left(\max_{1\le \ell\le n}\ell^{-\alpha}\Big|\sum_{j=1}^{\ell}\p_n^{\ell-j}\e''_{j}\Big|\ge h n^{1/p_\alpha}\right).
\end{align*}
Let $K>1$  and let $MK=n$ where $M$ and $K$ (not necessarily integers) depend on $n$ in a way which will be precised later. Splitting the set
$$
\{1, \dots, n\}=\bigcup_{m=1}^M \left(\N\cap \big((m-1)K, mK\big]\right),
$$
we have
\begin{align*}
\max_{1\le \ell\le n}\ell^{-\alpha}\Big|\sum_{j=1}^{\ell}\p_n^{\ell-j}\e''_{j}\Big|&\le 
\max_{1\le m\le  M}\max_{(m-1)K<\ell\le mK}\ell^{-\alpha}\Big|\sum_{j=1}^{\ell}\p_n^{\ell-j}\e''_{j}\Big|\\
&\le 
\max_{1\le m\le  M}\max_{(m-1)K<\ell\le mK}[(m-1)K+1]^{-\alpha}\p_n^{(m-1)K+1}\Big|\sum_{j=1}^{\ell}\p_n^{-j}\e''_{j}\Big|
\end{align*}
This leads to
$$
\P(T_n^{(1,2,1)}\ge h n^{1/p_\a})
\le 
n\sum_{m=1}^M \P\left(\max_{(m-1)K< \ell\le mK}\Big|\sum_{j=1}^{\ell}\p_n^{\ell-j}\e''_{j}\Big|\ge h n^{1/p_\a}\lambda_{m,K}\right)
$$
where $\lambda_{m,K}=[(m-1)K+1]^{\alpha}\p_n^{-(m-1)K-1}$.  Let
$q>p\ge p_\a$, whose choice will be precised
later. 
As $p_\a\le p$, $\e_1\in\mathcal{L}_{p_\a}$, so
by~\eqref{250314b},
\begin{equation}\label{270314a}
\E\abs{\e''_1}^q\le ch^{q-p_\a}n^{(q-p_\alpha)/p_\alpha},
\end{equation}
where the constant $c$ depends on $q$, $\a$ and the distribution of
$\e_1$ only.  
Using successively Markov's, Doob's and Rosenthal's
inequalities, we obtain
\begin{align*}
\P(T_n^{(1,2,1)}\ge h n^{1/p_\a})
&\le 
n\sum_{m=1}^M (hn^{1/p_\a}a_{m,K})^{-q}\E\Big|\sum_{j=1}^{mK}\p_n^{-j}\e''_{j}\Big|^q\\
&\le 
c_q n\sum_{m=1}^M (hn^{1/p_\a}a_{m,K})^{-q}
\Big[\Big(\sum_{j=1}^{mK}\p_n^{-2j}\Big)^{q/2}+
\sum_{j=1}^{mK}\p_n^{-qj}\E|\e_1''|^q\Big]\\
&\le 
C_qn(hn^{1/p_\alpha})^{-q}K^{-q\alpha}\sum_{m=1}^Mm^{-q\alpha}\p_n^{-qK}
\left[\frac{1}{(1-\p_n^2)^{q/2}}+\frac{\E|\e''_1|^q}{(1-\p_n^q)}\right],
\intertext{
using~\eqref{270314a} 
and recalling the restriction $h\ge h_0\ge  1$, we continue by}
&\le  
C'_q n^{1-q/p_\a}h^{-p_\a}                       
K^{-q\alpha}\sum_{m=1}^Mm^{-q\alpha}\p_n^{-qK}
\left[\frac{1}{(1-\p_n^2)^{q/2}}+\frac{n^{q/p_\a -1}}{(1-\p_n^q)}\right]\\
&\le  
C_{q,\a} n^{1-q/p_\a}h^{-p_\a}
\p_n^{-qK}K^{-q\alpha}
\left[\frac{1}{(1-\p_n^2)^{q/2}}+\frac{n^{q/p_\a -1}}{(1-\p_n^q)}\right],
\end{align*}
since $q>1/\alpha$. Now choosing $K\sim (1-\p_n)^{-1}$ and observing that 
$\p_n^{-qK}\sim \eexp{q}$, we finally have
\begin{align*}
P(T_n^{(1,2,1)}\ge h n^{1/p_\a})
&\le 
C'_{q,\a} n^{1-q/p_\a}h^{-p_\a} (1-\p_n)^{q\alpha}
\left[\frac{1}{(1-\p_n^2)^{q/2}}+\frac{n^{q/p_\a -1}}{(1-\p_n^q)}\right]\\
&\le 
C'_{q,\a} h^{-p_\a}\Big[\frac{n}{(n(1-\p_n))^{q/p_\alpha}}+(1-\p_n^q)^{q\alpha-1}\Big].
\end{align*}
Now, the hypothesis~\eqref{160314a} enables us to choose $q$ in such a
way that $n (n(1-\p_n))^{-q/p_\alpha}$ remains bounded, namely
$q>\max(p,p_\a/\delta)$, so we obtain
$T_n^{(1,2,1)}=O_P(n^{1/p_\alpha})$.
% Now, the hypothesis~\eqref{160314a} enables us to choose $q$ in such a
% way that $n (n(1-\p_n))^{-q/p_\alpha}$ remains bounded, namely
% $q>\max(p,p_\a/\delta)$, so we obtain
% $T_n^{(1,2,1)}=O_P(n^{1/p_\alpha})$.

\medskip
\noindent\textsl{Estimate for $T_n^{(2)}$.}
As already seen in the proof of inequality~\eqref{181113a},
\[
\max_{1\le \ell<n}\ell^{-\alpha}(1-\p_n^\ell)=O\big((1-\p_n)^\a\big),
\]
so it remains only to check  that 
\begin{equation}\label{last:3}
\max_{1\le k\le n}|y_{n,k}|=O_P(n^{1/p_\alpha}(1-\p_n)^{-\alpha}).
\end{equation}
But this is known from~\cite[Lemma 1]{MRS:2012a}, with $o_P$ instead
of $O_P$.  

\medskip
\noindent\textsl{Estimate for $T_n^{(3)}$.}
Finally for $T_n^{(3)}$, we have $\E y_n^2=\sum_{k=1}^n
\p_n^{2(n-k)}\sigma^2\le n\sigma^2$. Hence, $y_{n,n}=O_P(n^{1/2})$ and
we obtain $T_n^{(3)}=n^{-\alpha}O_P(n^{1/2})$. 

\color{black}

\section{Proofs of consistency lemmas}

\subsection{Proof of Lemma \ref{L281213}}
Recalling~\eqref{eq:1}, \eqref{050213d} and \eqref{091113a}, we note that
\begin{align}
 \wh{\p}_n - \p_n  
&=
\dfrac{\sum_{k=1}^n y_{n,k-1}(y_{n,k} - \p_n y_{n,k-1})}{\sum_{k=1}^n y_{n,k-1}^2}
\notag\\
&=
\dfrac{\sum_{k=1}^n y_{n,k-1}(\e_k+a_{n,k})}{\sum_{k=1}^n y_{n,k-1}^2}\notag\\
&=
\dfrac{\sum_{k=1}^n z_{n,k-1}\e_k  + \sum_{k=1}^n \tau_{n,k-1}\e_k + 
\sum_{k=1}^n y_{n,k-1}a_{n,k}}{\sum_{k=1}^n y_{n,k-1}^2}.
\label{311213A}
\end{align}
To obtain an upper bound for $\abs{ \wh{\p}_n - \p_n}$, we treat
separately the three sums in the above numerator.  

First, since $k^\ast\ge \lambda n$,
\begin{align}
\label{311213B}
\sum_{k=1}^n y_{n,k-1}^2 \ge \sum_{k=1}^{k^\ast} y_{n,k-1}^2 = 
\sum_{k=1}^{k^\ast} z_{n,k-1}^2 \ge \sum_{k=1}^{[\lambda n]} z_{n,k-1}^2,
\end{align}
whence
\begin{align}
\dfrac{\abs{\sum_{k=1}^n z_{n,k-1}\e_k}}{\sum_{k=1}^n y_{n,k-1}^2}
\le
\dfrac{\abs{\sum_{k=1}^n z_{n,k-1}\e_k}}{\sum_{k=1}^{[\lambda n]} z_{n,k-1}^2}
&=
\abs{\wt{\p}_n - \p_n}\frac{\sum_{k=1}^{n} z_{n,k-1}^2}{\sum_{k=1}^{[\lambda n]} z_{n,k-1}^2},
\label{311213a}
\end{align}
where $\wt{\p}_n=(\sum_{k=1}^n z_{n,k}z_{n,k-1})/(\sum_{k=1}^n
z_{n,k-1}^2)$ is the least squares estimator of $\p_n$ associated to
the process $(z_{n,k})$. As already observed, when $(y_{n,k})$
satisfies $H_A$, $(z_{n,k})$ satisfies $H_0$ and is then a nearly 
nonstationary AR(1) process. By~\cite[Th. 1]{Giraitis-Phillips:2004},
\begin{align*}
   \wt{\p}_n - \p_n = O_P\left(n^{-1/2}(1-\p_n)^{1/2}\right).
\end{align*}
Rewriting this estimate as
$O_P\left((1-\p_n)\left(n(1-\p_n)\right)^{-1/2}\right)$ and recalling
that $n(1-\p_n)$ tends to infinity, we get
\begin{align}
    \label{311213b}
\wt{\p}_n - \p_n = o_P(1-\p_n).
\end{align}
Moreover by~\cite[Lem. 2]{Giraitis-Phillips:2004},
\begin{align}
    \label{220413a}
    \frac{1-\p_n^2}{n} \sum_{k=1}^n z_{n,k-1}^2 \pcon \s^2.
\end{align}
Recalling that $\g_n=n(1-\p_n)$ is assumed to be non decreasing in $n$
or regularly varying, it is easily  deduced from this weak law of large
numbers that
\begin{align}
    \label{311213c}    
\dfrac{\sum_{k=1}^{n} z_{n,k-1}^2}{\sum_{k=1}^{[n\l]} z_{n,k-1}^2} = O_P(1).
\end{align}
Going back to~\eqref{311213a} with the estimates \eqref{311213b} and
\eqref{311213c}, we obtain
\begin{align}
    \label{311213d}
\dfrac{\abs{\sum_{k=1}^n z_{n,k-1}\e_k}}{\sum_{k=1}^n y_{n,k-1}^2}
= o_P(1-\p_n).
\end{align}

For the second sum in the numerator in~\eqref{311213A}, a simple
variance computation provides
\begin{align}
\label{311213e}
  \sum_{k=1}^n \tau_{n,k-1} \e_k = O_P\left(\left(\sum_{k=1}^n
      \tau_{n,k-1}^2 \right)^{1/2} \right).
\end{align}
Then
\begin{align*}
\sum_{k=1}^n \tau_{n,k-1}^2 &= \sum_{k=1}^n \left(\sum_{j=1}^{k-1} \p_n^{k-1-j} a_{n,j}\right)^2 = a_n^2 \sum_{k=1}^n \left(\sum_{j=1}^{k-1} \p_n^{k-1-j} \ind_{\I_n^*}(j)\right)^2 
\end{align*}
and
\begin{align*}
\sum_{k=1}^n \left(\sum_{j=1}^{k-1} \p_n^{k-1-j} \ind_{\I_n^*}(j)\right)^2
&=
\sum_{k=k^*+1}^{m^*} \left(\sum_{j=k^*+1}^{k-1} \p_n^{k-1-j} \right)^2 
+ \sum_{k=m^*+1}^{n} \left(\sum_{j=k^*+1}^{m^*} \p_n^{k-1-j} \right)^2\\
&=
\sum_{k=k^*+1}^{m^*}\left(\frac{1-\p_n^{k-k^\ast-1}}{1-\p_n}\right)^2
+\sum_{k=m^*+1}^{n}\p_n^{2(k-m^\ast-1)}\left(\sum_{i=0}^{\ell^\ast-1}\p_n^i\right)^2\\
&\le
\frac{l^*}{(1-\p_n)^2} + \frac{l^{*2}}{1-\p_n^2}.
\end{align*}
Since $a_n^2\ell^\ast=o(n(1-\p_n))$, it follows that
\begin{align*}
\sum_{k=1}^n \tau_{n,k-1}^2 = o\left(n(1-\p_n) {-1}\right) + o(n\ell^\ast).
\end{align*}
Accounting~\eqref{311213e}, this gives
\begin{align}
\label{010114a}
\sum_{k=1}^n \tau_{n,k-1} \e_k = o_P\left(n^{1/2}(1-\p_n)^{-1/2}\right) 
+ o_P\left(\sqrt{n\ell^\ast}\,\right).
\end{align}
Next, by~\eqref{311213B} and~\eqref{220413a} and recalling that
$n(1-\p_n)$ is non decreasing or regularly varying in $n$, we see that
\begin{align}
\label{010114b}
\dfrac{1}{\sum_{k=1}^n y_{n,k-1}^2}=
O_P\left(\frac{1-\p_{[n\lambda]}}{[n\lambda]}\right)
=O_P\left(\frac{[n\lambda](1-\p_{[n\lambda]})}{[n\lambda]^2}\right)
=O_P\left(\frac{1-\p_n}{n}\right).
\end{align}
Finally, combining~\eqref{010114a} and~\eqref{010114b}, we obtain
\begin{align}
\dfrac{\abs{\sum_{k=1}^n \tau_{n,k-1}\e_k}}{\sum_{k=1}^n y_{n,k-1}^2}
&= 
o_P\left(\sqrt{\frac{1-\p_n}{n}}\right) 
+ o_P\left((1-\p_n)\sqrt{\frac{\ell^\ast}{n}}\right)\notag\\
&=
o_P\left(\frac{1-\p_n}{\sqrt{n(1-\p_n)}}\right) + o_P(1-\p_n)
=o_P(1-\p_n),
\label{010114c}
\end{align}
since $\ell^\ast < n$ and $n(1-\p_n)$ tends to infinity.

To deal with the contribution of $\sum_{k=1}^n y_{n,k-1}a_{n,k}$, we
note first that
\begin{align*}
\dfrac{\abs{\sum_{k=1}^n y_{n,k-1}a_{n,k}}}{\sum_{k=1}^n y_{n,k-1}^2}
&\le
\dfrac{\left(\sum_{k=1}^n y_{n,k-1}^2\right)^{1/2}
\left(\sum_{k=1}^n a_{n,k}^2\right)^{1/2}}{\sum_{k=1}^n y_{n,k-1}^2}\\
&=
\frac{\sqrt{\ell^\ast}\abs{a_n}}{\left(\sum_{k=1}^n y_{n,k-1}^2\right)^{1/2}}\\
&=
O_P\left(\sqrt{\ell^\ast}\abs{a_n}\sqrt{\frac{1-\p_n}{n}}\right),
\end{align*}
using~\eqref{010114b}. Due to~\eqref{281213A}, this gives
\begin{align}
\label{010114d}
\dfrac{\abs{\sum_{k=1}^n y_{n,k-1}a_{n,k}}}{\sum_{k=1}^n y_{n,k-1}^2} = o_P(1-\p_n).
\end{align}
Going back to the decomposition~\eqref{311213A} with the
estimates~\eqref{311213d}, \eqref{010114c} and~\eqref{010114d}, we
conclude that  $\abs{\wh{\p}_n - \p_n} = o_P(1-\p_n)$.
%\end{proof}

\subsection{Proof of Lemma~\ref{L170413d}}
  We use
  \begin{align}
    \label{170413b}
    \sum_{j=1}^{n} \tau_{n,k-1} = \frac{a_n}{1-\p_n} \left(\ell^* -
      \p_n^{n-m^*} \frac{1-\p_n^{\ell^*}}{1-\p_n}\right).
  \end{align}
  To prove \eqref{proo170413A} we have to consider all the possible
  configurations of the sets $\{k+1, \ldots , k + \ell\}$ and
  $\brc{k^*+1, \ldots , k^* + \ell^*}$. There are six configurations $
  I_1, \ldots , I_6 $. Denote for $v = 1, \ldots , 6$
  \begin{align*}
    T_{\a,n}^{(v)} = \max_{k,\ell \in I_v} \ell^{-\a}
    \abs{\sum_{j=k+1}^{k+\ell} \tau_{n,j-1} - \frac{\ell}{n}
      \sum_{j=1}^n \tau_{n,j-1}}.
  \end{align*}

  First consider configuration $I_1 := \{k,\ell : [k^*+1,m^*] \subset
  [k+1, k+\ell]\}$. We easily obtain
  \begin{align*}
    \begin{split}
      \sum_{j=k+1}^{k+\ell} \tau_{n,j-1} &= a_n \left[ \sum_{j=k^*+1}^{m^*} \sum_{i=0}^{j-k^*-2} \p_n^i + \p_n^{-1} \sum_{j=m^*+1}^{k +\ell} \p_n^j \sum_{i=k^*+1}^{m^*} \p_n^{-i} \right] \\
      &= \frac{a_n}{1-\p_n} \left[\ell^* - \p_n^{k+\ell -m*}
        \frac{1-\p_n^{\ell*}}{1-\p_n} \right].
    \end{split}
  \end{align*}
  Together with \eqref{170413b} we find
  \begin{align*}
    T_{\a,n}^{(1)} &= \frac{\abs{a_n}}{1-\p_n}  \max_{k,\ell \in I_1} \ell^{-\a} \abs{\ell^*(1-\ell/n) - \frac{1-\p_n^{\ell*}}{1-\p_n} (\p_n^{k+\ell -m*}- (\ell/n) \p_n^{n-m^*})} \\
    &\le \frac{3\abs{a_n}}{1-\p_n} \ell^{*(1-\a)}.
  \end{align*}

  Now let us turn to second configuration $I_2 := \{k,\ell : [k+1,
  k+\ell] \subset [k^*+1,m^*]\}$. Obviously
  \begin{align*}
    \sum_{j=k+1}^{k+\ell} \tau_{n,j-1} &= \frac{a_n}{1-\p_n}\left(\ell
      - \frac{\p_n^{k-k^*}(1-\p_n^{\ell})}{1-\p_n}\right),
  \end{align*}
  so
  \begin{align*}
    T_{\a,n}^{(2)} &= \frac{\abs{a_n}}{1-\p_n}  \max_{k,\ell \in I_2} \ell^{-\a} \abs{\ell -  \frac{\p_n^{k-k^*}(1-\p_n^{\ell})}{1-\p_n} - \frac{\ell}{n} \left(\ell^* - \p_n^{n-m^*} \frac{1-\p_n^{\ell^*}}{1-\p_n}\right)} \\
    &\le \frac{4\abs{a_n}}{1-\p_n} \ell^{*(1-\a)}.
  \end{align*}

  If we consider the third configuration $I_3 := \{k,\ell : k+1 <
  k^*+1 \le k+\ell < m^* \}$, we have
  \begin{align*}
    \sum_{j=k+1}^{k+\ell} \tau_{n,j-1} &= a_n \sum_{j=k+1}^{k+\ell} \sum_{i=1}^{j-1} \p_n^{j-1-i} \ind_{\I_n^*}(i) = a_n \sum_{j=k^*+1}^{k+\ell} \sum_{i=k^*+1}^{j-1} \p_n^{j-1-i} \\
    &= \frac{a_n}{1-\p_n} \left((k+\ell-k^*) -
      \frac{1-\p_n^{k+\ell-k^*}}{1-\p_n}\right).
  \end{align*}
  Since $k+\ell-k^* \le \ell^*$, then it is easy to see, that
  \begin{align*}
    T_{\a,n}^{(3)} &= \frac{\abs{a_n}}{1-\p_n}  \max_{k,\ell \in I_3} \ell^{-\a} \abs{(k+\ell-k^*) - \frac{1-\p_n^{k+\ell-k^*}}{1-\p_n} - \frac{\ell}{n} \left(\ell^* - \p_n^{n-m^*} \frac{1-\p_n^{\ell^*}}{1-\p_n}\right)} \\
    &\le \frac{4\abs{a_n}}{1-\p_n} \ell^{*(1-\a)}.
  \end{align*}

  Next, fourth configuration is $I_4 := \{k,\ell : k^*+1 < k+1 \le m^*
  < k+\ell \}$. Now
  \begin{align*}
    \sum_{j=k+1}^{k+\ell} \tau_{n,j-1} &= \frac{a_n}{1-\p_n}
    \left[(m^*-k) - \p_n^{k-k^*} \frac{1-\p_n^{m^*-k}}{1-\p_n} +
      (1-\p_n^{k+\ell-m^*}) \frac{1-\p_n^{\ell^*}}{1-\p_n}\right]
  \end{align*}
  together with \eqref{170413b} and $m^*-k \le \ell^*$ gives the
  estimate
  \begin{align*}
    T_{\a,n}^{(4)} &= \frac{\abs{a_n}}{1-\p_n}  \max_{k,\ell \in I_4} \ell^{-\a} \Big|(m^*-k) - \p_n^{k-k^*} \frac{1-\p_n^{m^*-k}}{1-\p_n} + (1-\p_n^{k+\ell-m^*}) \frac{1-\p_n^{\ell^*}}{1-\p_n} \\
    &\quad{} - \frac{\ell}{n} \left(\ell^* - \p_n^{n-m^*}
      \frac{1-\p_n^{\ell^*}}{1-\p_n}\right)\Big| \le
    \frac{5\abs{a_n}}{1-\p_n} \ell^{*(1-\a)}.
  \end{align*}

  From the fifth configuration $I_5 := \{k,\ell : m^* < k+1 < k+\ell
  \}$, we get
  \begin{align*}
    \sum_{j=k+1}^{k+\ell} \tau_{n,j-1} &= \frac{a_n}{1-\p_n} \cdot
    \p_n^{k-m^*} \frac{(1-\p_n^{\ell})(1-\p_n^{\ell^*})}{1-\p_n}
  \end{align*}
  and together with \eqref{170413b} the estimate is
  \begin{align*}
    T_{\a,n}^{(5)} &= \frac{\abs{a_n}}{1-\p_n}  \max_{k,\ell \in I_5} \ell^{-\a} \abs{\p_n^{k-m^*} \frac{(1-\p_n^{\ell})(1-\p_n^{\ell^*})}{1-\p_n} - \frac{\ell}{n} \left(\ell^* - \p_n^{n-m^*} \frac{1-\p_n^{\ell^*}}{1-\p_n}\right)} \\
    &\le \frac{3\abs{a_n}}{1-\p_n} \ell^{*(1-\a)}.
  \end{align*}

  Finally sixth configuration $I_6 := \{k,\ell : k+1 < k+\ell \le k^*
  \}$ gives us
  \begin{align*}
    \sum_{j=k+1}^{k+\ell} \tau_{n,j-1} &= 0.
  \end{align*}
  Thus
  \begin{align*}
    T_{\a,n}^{(6)} &= \frac{\abs{a_n}}{1-\p_n} \abs{\ell^* -
      \p_n^{n-m^*} \frac{1-\p_n^{\ell^*}}{1-\p_n}} \max_{k,\ell \in
      I_6} \ell^{-\a} \frac{\ell}{n} \le \frac{2\abs{a_n}}{1-\p_n}
    \ell^{*(1-\a)}.
  \end{align*}

  So collecting all the estimates of $T_{\a,n}^{(v)}$, $v=1, \ldots ,
  6$ we obtain \eqref{proo170413A}.

\newpage 
\bibliographystyle{gSTA}
\bibliography{Jurgita}

\end{document}